\title
[Deformation of smooth DM stack via DGLA]
{Deformation of a smooth Deligne-Mumford stack via differential graded Lie algebra}
\author[Y. Nagai]{Yasunari Nagai}
\author[F. Sato]{Fumitoshi Sato}
\address{
Korea Institute for Advanced Study (KIAS),
207-43 Cheongnyangni 2-dong, Dongdaemun-gu, Seoul 130-722, Korea}
\curraddr{Institut f\"ur Mathematik, Universit\"at Mainz, 
Staudingerweg 9, 55099 Mainz Germany}
\email{nagai@mathematik.uni-mainz.de}
\address{
Korea Institute for Advanced Study (KIAS),
207-43 Cheongnyangni 2-dong, Dongdaemun-gu, Seoul 130-722, Korea}
\curraddr{Graduate School of Mathematics, 
Furocho, Chikusaku,Nagoya, 
464-8602 Japan}
\email{fumi@math.nagoya-u.ac.jp}
\date{15 July 2008: Revised version}
\theoremstyle{plain}
\newtheorem{theorem}[subsection]{Theorem}
\newtheorem{proposition}[subsection]{Proposition}
\newtheorem{propd}[subsection]{Proposition--Definition}
\newtheorem{corollary}[subsection]{Corollary}
\newtheorem*{theorem*}{Theorem}
\newtheorem*{corollary*}{Corollary}
\newtheorem*{MainTheorem}{Main Theorem}
\theoremstyle{definition}
\newtheorem{example}[subsection]{Example}
\theoremstyle{remark}
\newtheorem*{remark}{Remark}
\def\lto{\longrightarrow}
\def\CC{\mathbb C}
\DeclareMathOperator{\id}{id}
\DeclareMathOperator{\ob}{ob}
\DeclareMathOperator{\Spec}{Spec}
\DeclareMathOperator{\Ext}{Ext}
\DeclareMathOperator{\Ker}{Ker}
\DeclareMathOperator{\Def}{Def}
\def\rd{\partial}
\def\mathbi#1{\textbf{\em #1}}
\begin{document}

\baselineskip 14.58pt
\parskip 5pt

\maketitle
\vspace{-1cm}

\begin{abstract}
 For a smooth Deligne-Mumford stack over $\CC$, 
 we define its associated Kodaira-Spencer differential graded Lie
 algebra and show that 
 the deformation functor of the stack is isomorphic to 
 the deformation functor of the Kodaira-Spencer algebra
 if the stack is proper over $\CC$. 
\end{abstract}

\section*{Introduction}

Grothendieck and his followers established a general method to 
deal with the \emph{deformation theory}, which was initiated 
by Kodaira and Spencer. Grothendieck's method can be applied 
to the deformation of almost every algebro-geometric 
(or analytico-geometric) object. 

Recently, many people believe that a deformation theory over a field of
characteristic 0 should be `controlled' by a \emph{differential graded Lie
algebra} (DGLA in short). This principle seems to have come
from the researches concerning homotopy theory, quantization,
mirror symmetry etc (see, for example, \cite{K}).

One prototype example to this principle is the deformation theory of
compact complex manifold via Maurer-Cartan equation on the 
vector field valued $(0,1)$-forms. This is the 
\emph{Newlander-Nirenberg theorem} (or rather Kuranishi's proof of
the existence of Kuranishi space). 
If we restrict to infinitesimal deformations, we can describe 
the situation as a bijection between
\begin{equation}\label{NNcorresp}
\frac
{\left\{\mbox{Maurer-Cartan solutions in }KS_X^1\otimes m_A\right\}}
{\mbox{gauge equivalence}}
\cong 
\frac
{\left\{\begin{matrix}\mbox{deformations of a compact} \\
	\mbox{complex manifold $X$ over $A$}\end{matrix} \right\}}
{\mbox{isomorphisms}}
\end{equation}
where $A$ is a local artinian $\mathbb C$-algebra 
and $KS_X^{\bullet}=(A^{0,\bullet}_X(\Theta _X), \bar{\rd}, [-,-])$
the \emph{Kodaira-Spencer algebra} on $X$ (see Theorem \ref{classical}). 
This isomorphism is functorial in $A$. 
The left hand side is the deformation functor associated to the Kodaira-Spencer 
DGLA $KS_X^{\bullet}$, 
denoted by $\Def _{KS_X}$, and 
the right hand side is the usual deformation functor $\Def _X$ of $X$. 

Although the correspondence \eqref{NNcorresp} was originally based on highly 
analytic arguments by Newlander-Nirenberg, 
the statement itself concerns only infinitesimal deformations, 
therefore it is algebraic in nature. 
Actually, Iacono \cite{Ia} recently gave a purely algebraic proof 
(i.e., it involves no analysis of differential equations). 

With a view toward this situation, it is quite reasonable to expect that 
the correspondence \eqref{NNcorresp} can be generalized to the case of 
smooth Deligne-Mumford stacks. We can get some flavor from the case 
where $\mathscr X$ is given by a global quotient $[X/G]$ of a proper 
smooth algebraic variety $X$ by an action of a finite group $G$. 
Giving a deformation of $\mathscr X=[X/G]$ should be equivalent to giving 
a deformation of $X$ on which the $G$-action lifts. Therefore, the deformations of 
$\mathscr X$ are given by the $G$-invariant part $\Def _X(A)^G$ 
of the deformations of $X$. Since the correspondence \eqref{NNcorresp} is 
$G$-equivariant, if we take $(KS _X^{\bullet})^G$ as the DGLA, 
we get a functorial bijection
\[
\Def _{(KS _X)^G}(A)\overset{\sim}{\lto}
\Def _{X}(A)^G, 
\]
which describes the infinitesimal deformations of the stack 
$\mathscr X=[X/G]$ via a DGLA $(KS _X^{\bullet})^G$. 

In this article, we prove the following theorem:

\begin{MainTheorem}[Theorem \ref{main}]
Let $\mathscr X$ be a smooth separated analytic Deligne-Mumford stack. 
Then we can associate the Kodaira-Spencer differential graded Lie algebra 
$KS_{\mathscr X}^{\bullet}$ and there is a natural isomorphism of deformation functors
\[
\Gamma :\Def _{KS_{\mathscr X}}\to \Def _{\mathscr X}.
\]
\end{MainTheorem}

By a standard GAGA type argument (Proposition \ref{gaga2}), 
we also have a corresponding statement for a proper smooth (algebraic) 
Deligne-Mumford stack over $\mathbb C$. 

\begin{corollary*}
Let $X$ be a proper smooth Deligne-Mumford stack over $\mathbb C$. 
Then we also have the isomorphism 
$\Def _{KS_{\mathscr X}}\overset{\sim}{\to} \Def _{\mathscr X}$, 
where the right hand side is the deformation functor of 
algebraic deformations of $\mathscr X$.
\end{corollary*}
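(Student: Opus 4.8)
The plan is to deduce the corollary from the Main Theorem by an analytification-and-GAGA argument, so that essentially all of the deformation-theoretic work has already been carried out and only the comparison of algebraic with analytic deformations remains. First I would analytify: since $\mathscr X$ is proper and smooth over $\CC$, its analytification $\mathscr X^{an}$ is a smooth separated (indeed compact) analytic Deligne-Mumford stack, so the Main Theorem (Theorem \ref{main}) applies verbatim to $\mathscr X^{an}$ and produces a natural isomorphism $\Gamma : \Def_{KS_{\mathscr X^{an}}} \overset{\sim}{\lto} \Def_{\mathscr X^{an}}$. The Kodaira-Spencer DGLA $KS_{\mathscr X}^{\bullet} = (A^{0,\bullet}_{\mathscr X}(\Theta_{\mathscr X}), \bar{\rd}, [-,-])$ is built out of the Dolbeault-type resolution of the tangent sheaf and is therefore an intrinsically analytic object: it depends only on $\mathscr X^{an}$ and coincides with $KS_{\mathscr X^{an}}^{\bullet}$. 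Hence $\Def_{KS_{\mathscr X}} = \Def_{KS_{\mathscr X^{an}}}$, and the left-hand functor does not distinguish the algebraic stack from its analytification.

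Next I would compare the two deformation functors on the right. There is a canonical natural transformation $\Def^{\mathrm{alg}}_{\mathscr X} \to \Def_{\mathscr X^{an}}$ sending an algebraic deformation over a local artinian $\CC$-algebra $A$ to its analytification; the content of Proposition \ref{gaga2} is that this transformation is an isomorphism of functors precisely because $\mathscr X$ is proper. Concretely, a deformation over such an $A$ is a flat family over $\Spec A$ restricting to $\mathscr X$ over the closed point, and since $A$ is artinian the underlying topological space (equivalently, the relevant site) is unchanged, so the deformation is exactly the datum of a flat lift of the structure sheaf within coherent sheaves; GAGA for coherent sheaves on a proper Deligne-Mumford stack then identifies the algebraic and analytic families, and functoriality in $A$ promotes this to an isomorphism of functors.

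Composing the two steps yields $\Def_{KS_{\mathscr X}} \overset{\sim}{\lto} \Def^{\mathrm{alg}}_{\mathscr X}$, which is the asserted isomorphism. The only genuinely nontrivial input is Proposition \ref{gaga2}: all the infinitesimal deformation theory has been absorbed into the Main Theorem, and what remains is purely the GAGA comparison. I expect this GAGA step --- verifying that Serre's comparison theorem survives the passage from proper schemes to proper Deligne-Mumford stacks --- to be the main obstacle. It rests on the coherent-cohomology GAGA theorem for proper Deligne-Mumford stacks, which one reduces (for instance via \'etale charts, or via the coarse moduli space together with control of the finite stabilizer groups) to the classical proper-scheme case. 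This is also exactly the point at which properness of $\mathscr X$ is \emph{indispensable}: without it the comparison of algebraic and analytic deformations already fails for schemes, so no such corollary can hold in the non-proper setting.
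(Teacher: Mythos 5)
Your proposal is correct and follows essentially the same route as the paper: the corollary is obtained by composing the Main Theorem applied to $\mathscr X_{an}$ with the comparison isomorphism $\alpha :\Def _{\mathscr X}\to \Def _{\mathscr X_{an}}$ of Proposition \ref{gaga2}, which in turn rests on the coherent-cohomology GAGA statement (Proposition \ref{gaga1}) identifying the tangent and obstruction spaces $H^i(\mathscr X,\Theta _{\mathscr X})$, $i=1,2$. Your identification of the GAGA comparison as the only nontrivial input, and of properness as the indispensable hypothesis, matches the paper's intent exactly.
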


One obvious application of our main theorem is the deformation of 
a proper algebraic variety with only isolated quotient singularities, because 
the isolated quotient singularity is rigid if the dimension is not less than 
three \cite{Sch2}. 

\begin{corollary*}
Let $X$ be a proper algebraic variety with only isolated quotient singularities 
over $\mathbb C$ of dimension not less than three. 
Then the deformation functor of $X$ is isomorphic to 
the deformation functor of the Kodaira-Spencer algebra on the 
canonical covering stack $\mathscr X\to X$. 
\end{corollary*}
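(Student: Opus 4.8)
The plan is to factor the desired isomorphism through the canonical covering stack. By hypothesis $X$ is proper with only isolated quotient singularities and $\dim X\geq 3$, so its singular locus is a finite set of points and hence of codimension $\geq 3$. The canonical covering stack $\pi\colon\mathscr X\to X$ is therefore a proper smooth Deligne--Mumford stack over $\mathbb C$ whose coarse moduli space is $X$ and which restricts to an isomorphism over the smooth locus $X_{\mathrm{sm}}=X\setminus\Sing X$. Applying the first Corollary above (obtained from the Main Theorem, Theorem~\ref{main}, together with the comparison of algebraic and analytic deformations in Proposition~\ref{gaga2}) to $\mathscr X$, we already have a natural isomorphism $\Def_{KS_{\mathscr X}}\overset{\sim}{\lto}\Def_{\mathscr X}$. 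Hence it suffices to construct a natural isomorphism $\Def_{\mathscr X}\overset{\sim}{\lto}\Def_X$ between the deformation functor of the smooth stack and that of the singular variety; composing the two yields the corollary.

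First I would produce the two natural transformations. In one direction, sending a flat deformation $\mathscr X_A\to\Spec A$ of $\mathscr X$ to its coarse moduli space $X_A\to\Spec A$ defines a morphism $\Phi\colon\Def_{\mathscr X}\to\Def_X$, where one checks that $X_A$ is flat over $A$ with central fibre $X$. In the other direction I would use the \emph{rigidity} of isolated quotient singularities in dimension $\geq 3$ (Schlessinger, \cite{Sch2}): for each singular germ $(\mathbb C^n/G,0)$ one has $T^1=0$, and a deformation functor with vanishing tangent space and an obstruction theory has only the trivial infinitesimal deformation. Consequently any deformation $X_A$ of $X$ is, locally near each singular point, isomorphic to the product $(\mathbb C^n/G)\times\Spec A$; the relative canonical stack of this local model is $[\mathbb C^n/G]\times\Spec A$, and gluing these with the unaltered structure over $X_{\mathrm{sm}}\times\Spec A$ produces a deformation $\mathscr X_A$ of $\mathscr X$. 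This defines $\Psi\colon\Def_X\to\Def_{\mathscr X}$.

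Finally I would verify that $\Phi$ and $\Psi$ are mutually inverse, which again reduces to the local models: over $X_{\mathrm{sm}}$ the two functors agree because $\pi$ is an isomorphism there, while near a singular point both compositions are the identity on the rigid model, whose coarse space is $(\mathbb C^n/G)\times\Spec A$ and whose canonical stack is again $[\mathbb C^n/G]\times\Spec A$. The main obstacle I anticipate is making the construction of $\Psi$ genuinely \emph{canonical and functorial} over a non-reduced Artinian base: one must show that the relative canonical stack commutes with base change and with the gluing data, i.e. that the local trivializations furnished by rigidity patch to a global smooth stack independent of choices, and that forming coarse moduli spaces of the resulting family is compatible with base change, so that $\Phi$ is well defined and inverse to $\Psi$. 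Once this compatibility is in place---it follows from the exactness of $\pi_*$ and the vanishing $R^i\pi_*=0$ for a coarse moduli map in characteristic $0$, together with the reflexivity identification $\pi_*T_{\mathscr X}=T_X$---naturality in $A$ is automatic and the corollary follows.
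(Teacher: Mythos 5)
Your proposal is correct and follows the same route the paper intends: the isomorphism $\Def_{KS_{\mathscr X}}\cong\Def_{\mathscr X}$ supplied by the Main Theorem together with Proposition \ref{gaga2}, combined with Schlessinger's rigidity of isolated quotient singularities in dimension $\geq 3$ \cite{Sch2} to identify $\Def_{\mathscr X}$ with $\Def_X$ via the canonical covering stack. The paper offers only a one-sentence justification of this corollary, so your explicit construction of the mutually inverse transformations $\Phi$ and $\Psi$ (and the attention to functoriality of the gluing over a non-reduced Artinian base) merely fills in detail the authors leave implicit.
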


More generally, our main theorem describes the locus of 
equisingular deformations in the deformation space of a proper normal variety 
with only quotient singularities, i.e., a complex V-manifold. 

Our proof of the main theorem is parallel to the proof in \cite{Ia}. 
We mention some reasons why one can transplant the proof 
to the case of DM stacks. One reason is, of course, the algebraic nature 
of the arguments of the proof in \cite{Ia}. Another is the recent development 
of the deformation theory on algebraic stacks due to Aoki and Olsson 
\cite{A, O1}. In particular, it is crucial for our argument that 
Aoki's theorem on the equivalence of 
the deformation functor of an algebraic stack and the deformation functor of 
the simplicial space associated to the stack. 

The article goes as follows. In \S1, we review some general results 
on deformation functors and DGLA fixing our notation. The next section 
treats the deformation theory of stacks. We review Aoki's result and 
deduce some consequences in the case of DM stacks. We also treat 
GAGA type arguments for DM stacks as far as we need. 
In \S 3, we define the Kodaira-Spencer algebra on a smooth DM stack 
and prove a Dolbeault type theorem, which is essentially due to Behrend 
\cite{B}. After these preparations, we verify that the proof in \cite{Ia} works 
completely in the case of our Main Theorem. 

\paragraph{\bf Acknowledgement} 
This project was started at Mittag-Leffler Institute (Djursholm, Sweden), 
during the second author's participation in the program ``Moduli Spaces''. 
The second author would like to thank D. Iacono and M. Manetti for 
many valuable discussions. 
The authors are grateful to M. Aoki for correspondences which helped 
us in understanding the deformation theory of algebraic stacks. 
They also thank the referee for careful reading and comments. 

\section{Deformation theory and differential graded Lie algebras}

In this section, we review the theory of deformation functors of 
differential graded Lie algebras and its application to the deformation
theory of a compact complex manifold. The references are \cite{M} and
\cite{Ia}, Chap I. For \S1.3, see also \cite{F-M}. 

\subsection{}
Let $\mbox{\bf Art}$ be the category of local artinian $\mathbb C$-algebra $A$
such that $A/m_A\cong \mathbb C$, where $m_A$ is the maximal ideal of
$A$. We mean by a \emph{functor of artinian rings} a covariant functor 
\[
 D:\mbox{\bf Art}\to \mbox{\bf Set}
\]
such that $D(\mathbb C)$ is the one-point set. The \emph{tangent space} 
$t_D$ to a functor of artininan rings $D$ is defined by
\[
 t_D=D(\mathbb C[\varepsilon]), 
\]
where $\mathbb C[\varepsilon]$ is the ring of dual numbers $\mathbb
C[x]/(x^2)$. 

\subsection{}\label{gendef}
 Let $A,B,C$ be local artinian $\mathbb C$-algebras and
 \[
 \eta : D(B\times _A C)\to D(B)\times _{D(A)} D(C)
 \]
 be the natural map. We call a functor of artininan rings $D$
 a \emph{deformation functor} if it satisfies (i) if $B\to A$ is surjective, so is $\eta$, and 
 (ii) If $A=\mathbb C$, $\eta$ is bijective (\cite{M}, Definition 2.5).
 We remark that these conditions are closely related to Schlessinger's criterion 
 of existence of a \emph{hull} (see Remark to Definition 2.7 in \cite{F-M}).

\subsection{}\label{genobs}
Most deformation functors are described (in an
implicit way) by obstruction classes to the existence of a lifting of
a small extension and the space which parametrizes
the isomorphism classes of liftings in case the obstruction
vanishes.  Fantechi and Manetti abstracted 
these ``obstruction theories'' in the framework of functors of artininan rings 
(\cite{F-M} Definition 3.1, see also \cite{M} Defnition 2.12). 
 An \emph{obstruction theory} of a functor of artinian rings $D$ is
 a pair $(V,\ob(-))$ consisting of a $\mathbb C$-vector space $V$, the
 \emph{obstruction space}, and 
 a map $\ob(\alpha):D(\bar A)\to V\otimes I$, the \emph{obstruction
 map}, for every small extension 
 \[
  \alpha :\quad 0\lto I\lto A\lto \bar A\lto 0, 
 \]
 i.e., an extension with $I\cdot m_A=0$, satisfying the following conditions:
 \begin{enumerate}[(i)]
  \item If $\bar x\in D(\bar A)$ lifts to $D(A)$, $\ob(\alpha)(\bar
	x)=0$. 
  \item For any morphism $\varphi$ of small extensions
	\[
	 \xymatrix{\alpha _1: &
	0\ar[r] & I_1 \ar[r]\ar[d]_{\varphi _I} & A_1\ar[r]\ar[d]_{\varphi} 
	& \bar A_1 \ar[r]\ar[d]_{\bar\varphi} & 0\\
	\alpha _2: & 0\ar[r] & I_2 \ar[r] & A_2 \ar[r] & \bar A_2 \ar[r] & 0
	}\lower45pt\hbox{,}
	\]
	we have the compatibility
	$ \ob(\alpha _2)(\bar\varphi_*(\bar x))=(\id _V\otimes \varphi
	_I)(\ob(\alpha _1)(\bar x))$ for every $\bar x\in D(\bar A_1)$. 
 \end{enumerate}
 Moreover, if $\ob(\alpha)(\bar x)=0$ implies the existence of a lifting
 of $\bar x$ to $D(A)$, the obstruction theory is called
 \emph{complete}. 

\begin{proposition}[\cite{M}, Proposition 2.17]\label{criteria}
 Let $D_1$ and $D_2$ be deformation functors and $\varphi :D_1\to D_2$ a
 morphism of functors, $(V_1,\ob _{D_1})$ and $(V_2, \ob _{D_2})$
 obstruction theories for $D_1$ and $D_2$, respectively. Assume that
 \begin{enumerate}[(i)]
  \item $\varphi $ induces a surjection (resp. bijection) on the tangent
	spaces $t_{D_1}\to t_{D_2}$. 
  \item There is an injective linear map between obstruction spaces $\psi
	:V_1\to V_2$ such that $\ob _{D_2}\circ \varphi = \psi \circ \ob
	_{D_1}$. 
  \item The obstruction theory $(V_1, \ob _{D_1})$ is complete. 
 \end{enumerate}
 Then, the morphism $\varphi$ is smooth (resp. \'etale). 
\end{proposition}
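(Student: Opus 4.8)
The plan is to reduce everything to the defining lifting property of a smooth morphism. Recall that $\varphi$ is \emph{smooth} when, for every surjection $A\lto\bar A$ in $\mbox{\bf Art}$, the natural map
\[
D_1(A)\lto D_1(\bar A)\times _{D_2(\bar A)}D_2(A)
\]
is surjective, and \emph{\'etale} when it is smooth and bijective on tangent spaces. Since any surjection in $\mbox{\bf Art}$ factors as a finite composition of small extensions and this surjectivity property is stable under such composition, it is enough to prove the displayed surjectivity for a single small extension $\alpha:0\lto I\lto A\lto\bar A\lto 0$.

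First I would settle the existence of a lift in $D_1$. Fix a pair $(\bar x,y)$ in the fiber product, so that $\bar x\in D_1(\bar A)$ and $y\in D_2(A)$ both restrict to $\bar y:=\varphi(\bar x)\in D_2(\bar A)$. The compatibility in hypothesis (ii) gives $(\psi\otimes\id _I)(\ob _{D_1}(\alpha)(\bar x))=\ob _{D_2}(\alpha)(\bar y)$; since $\bar y$ already lifts to $y$, its obstruction vanishes by \ref{genobs}(i), and the injectivity of $\psi$ (hence of $\psi\otimes\id _I$) forces $\ob _{D_1}(\alpha)(\bar x)=0$. Completeness of $(V_1,\ob _{D_1})$, hypothesis (iii), then produces a lift $x'\in D_1(A)$ of $\bar x$.

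It remains to correct $x'$ so that its image in $D_2(A)$ is exactly $y$ rather than merely some lift of $\bar y$. Here I would invoke the structural fact that, for a deformation functor $D$ and the small extension $\alpha$, the nonempty fibers of $D(A)\lto D(\bar A)$ admit a transitive action of $t_D\otimes I$, functorially in $D$, so that $\varphi$ is equivariant along $\varphi _*:t_{D_1}\to t_{D_2}$. Both $\varphi(x')$ and $y$ lie over $\bar y$, so there is $v\in t_{D_2}\otimes I$ with $v\cdot\varphi(x')=y$; by hypothesis (i) the map $t_{D_1}\otimes I\to t_{D_2}\otimes I$ is surjective, so I lift $v$ to $u\in t_{D_1}\otimes I$ and put $x:=u\cdot x'$. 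Then $x$ still lies over $\bar x$, while equivariance gives $\varphi(x)=v\cdot\varphi(x')=y$, so $x\mapsto(\bar x,y)$ and $\varphi$ is smooth. In the bijective case $\varphi$ is then smooth and bijective on tangent spaces, hence \'etale.

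The hard part is not any of these formal manipulations but the structural input used in the last step: that the fibers of $D(A)\to D(\bar A)$ form a transitive $t_D\otimes I$-set and that this action is natural in $D$. This is exactly where both clauses of the deformation-functor axiom \ref{gendef} are needed---one extracts the action by applying $\eta$ to $A\times _{\bar A}A$ and identifies the acting group via $\mathbb C\oplus I$---and once it is in place the entire criterion follows by the diagram chase above.
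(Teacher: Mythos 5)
Your argument is correct and is essentially the standard proof of this criterion: reduce to small extensions, use the obstruction comparison plus completeness to produce some lift, then correct it using the transitive, functorial action of $t_{D}\otimes I$ on the fibers of $D(A)\to D(\bar A)$ that the deformation-functor axioms of \ref{gendef} provide. The paper itself gives no proof, deferring entirely to \cite{M}, Proposition 2.17, and your write-up matches that reference's approach, including correctly isolating the fiber action as the one nontrivial structural input.
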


\subsection{}
Now we recall the definition of a differential graded Lie algebra.
Let $L^{\bullet}=\bigoplus _{i\in \mathbb Z} L_i$ be a graded $\mathbb C$-vector space. 
A triple $(L^{\bullet}, d, [-,-])$ is a
 \emph{differential graded Lie algebra} (DGLA in short) if it satisfies 
  (i) $d=\sum d_i$ is a homogeneous differential of degree 1, 
  i.e., $d_i:L^i \to L^{i+1} $ and $d\circ d=0$, 
  (ii) the bracket $[-,-]$ is a homogeneous, graded skew-symmetric
	bilinear form on $L^{\bullet}$, i.e., $[L^i,L^j]\subset
	L^{i+j}$ and $[a,b]+(-1)^{\deg a \cdot \deg b}[b,a]=0$, 
  (iii) the graded Jacobi identity  
	$[a,[b,c]]=[[a,b],c]+(-1)^{\deg a\cdot \deg b}[b,[a,c]]$
	holds for the bracket, and 
  (iv) the graded Leibniz rule $d[a,b]=[da,b]+(-1)^{\deg a}[a,db]$ holds. 

\subsection{}
Given a DGLA $L^{\bullet}$, we can associate its Maurer-Cartan functor
as follow. Let $L^{\bullet}$ be a DGLA and $A$ a local artinian $\mathbb C$-algebra. 
We define the \emph{Maurer-Cartan functor} 
$MC_L: \mbox{\bf Art} \to \mbox{\bf Set}$ associated to $L^{\bullet}$ by 
\[
 MC_L(A)=\{x\in L^1\otimes m_A\mid dx+\frac 12[x,x]=0\},
\]
 where $d$ and $[-,-]$ is the DGLA structure  
 on $L^{\bullet}\otimes m_A$ induced 
 in an obvious way by $L^{\bullet}$. We call an element of $MC_L(A)$ a
 \emph{Maurer-Cartan solution}. 

\subsection{}\label{dgladef}
 In some cases, the space of Maurer-Cartan solutions $MC_{L}(A)$ is `too big'. 
 In such a case, we get a more reasonable deformation functor considering 
 the gauge action. 
 Let $L^{\bullet}$ be a DGLA and $a\in L^0\otimes m_A$ where 
 $A$ is a local artinian $\mathbb C$-algebra. 
 For $x\in L^1\otimes m_A$, the \emph{gauge action} of $a$ is given by
 \[
 e^a * x = x + \sum _{n=0}^{\infty} 
 \frac{[a,-]^n}{(n+1)!}([a,x]-da), 
 \]
 where $[a,-]^n(y)$ is the operator $[a,-]$ applied to $y$ $n$-times
 reccursively: \linebreak
 $[a,[a,[\cdots ,[a,y]\cdots]]]$. 
 Note that the gauge action preserves the space of the Maurer-Cartan 
 solutions $MC_L (A)$. Two Maurer-Cartan solutions 
 $x,y\in MC_L(A)$ are said to be \emph{gauge equivalent} 
 if there exists $a\in L^0\otimes m_A$ such that $y=e^a*x$. 
 We define the \emph{deformation functor} $\Def_L: \mbox{\bf Art} \to \mbox{\bf Set}$
 of a DGLA $L^{\bullet}$ by $\Def_L(A)=MC_L(A)/(\mbox{gauge equivalence})$. 
 One can easily check the following facts (see, for example, \cite{Ia} \S I.3.5): 
\begin{enumerate}[(i)]
 \item  The functor $\Def _L$ is a deformation functor in the sense of
 Definition \ref{gendef}. 
 \item  The tangent space to the deformation functor $\Def _L$ is
 canonically isomorphic to $H^1(L^{\bullet}, d)$. 
 \item There is a natural complete obstruction theory of $\Def _L$ 
       with the obstruction space $H^2(L^{\bullet}, d)$. 
\end{enumerate}

\subsection{}
One of the DGLA's which appear naturally 
in a geometric context is the Kodaira-Spencer algebra. 
Let $X$ be a complex manifold and $KS_X^p=A^{0,p}_X(\Theta _X)$ 
be the space of $C^{\infty}$-differential forms of type $(0,p)$ 
with (holomorphic) vector field coefficients. Then,
$(KS_X^{\bullet},\bar{\rd},[-,-])$ is a DGLA in a natural way, where
$[-,-]$ is a bracket induced by the Lie bracket on $\Theta _X$. We
call this DGLA the \emph{Kodaira-Spencer algebra}. 

\subsection{}\label{defsch}
We can reformulate the classical deformation theory of a compact complex
manifold by Kodaira-Spencer,
Newlander-Nirenberg and Kuranishi, using the framework of DGLA and its
deformation functor for infinitesimal deformations. 

Let $X$ be a scheme (resp. an analytic space) 
and $A$ be a local artinian $\mathbb C$-algebra. A \emph{lifting} of $X$ to $A$ is
a pair $(\widetilde X, \varphi)$ where $\widetilde X$ 
is a scheme (resp. an analytic space) flat over $A$ and $\varphi$ is 
an isomorphism $\widetilde X\times _{\Spec A} \Spec \mathbb C\overset{\sim}\to X$. 
Two liftings $(\widetilde X_1, \varphi _1)$ and $(\widetilde X_2, \varphi_2)$ are isomorphic 
if there exists an $A$-isomorphism $\tilde f:\widetilde X_1\to \widetilde X_2$ that is 
compatible with the marking isomorphisms, i.e. 
$\varphi _1=\varphi _2\circ (\tilde f\otimes _A \mathbb C)$. 
We define the \emph{deformation functor  
$\Def _{X} : \mbox{\bf Art} \to \mbox{\bf Set}$ of $X$}
by $\Def _X(A)=\{\mbox{isomorphism class of 
liftings $\widetilde X$ of $X$ to $A$}\}$. 

\begin{theorem}[See \cite{Ia} Theorem II.7.3]\label{classical}
Let $X$ be a complex manifold and $KS_X^\bullet$ 
the associated Kodaira-Spencer algebra. 
Then we have an isomorphism $\gamma : \Def _{KS_X} \to \Def _X$ between 
the deformation functors. 
\end{theorem}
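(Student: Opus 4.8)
The plan is to construct an explicit natural transformation $\gamma$ and then verify it is an isomorphism by appealing to the criterion of Proposition \ref{criteria}, taking $D_1 = \Def_{KS_X}$ and $D_2 = \Def_X$.

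First I would construct $\gamma$. Given $A \in \mathbf{Art}$ and a Maurer-Cartan solution $x \in KS_X^1 \otimes m_A = A^{0,1}_X(\Theta_X)\otimes m_A$, the operator $\bar\partial + [x,-]$ defines a deformed operator on $A^{0,\bullet}_X \otimes A$; the Maurer-Cartan equation $\bar\partial x + \frac12[x,x]=0$ is exactly the condition that makes this operator square to zero, i.e.\ the infinitesimal integrability condition. Its kernel in degree $0$ is a sheaf of flat $A$-algebras on the underlying space of $X$, which defines a lifting $\widetilde X$ over $A$ together with its reduction isomorphism to $X$; this assigns to $x$ an element $\gamma_A([x]) \in \Def_X(A)$. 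I would then check that two Maurer-Cartan solutions are gauge equivalent if and only if the resulting liftings are isomorphic, so that $\gamma$ is well defined on gauge-equivalence classes, and that it is functorial in $A$. In the infinitesimal setting this whole dictionary is algebraic and requires no appeal to the analytic Newlander-Nirenberg theorem; compare \cite{Ia}.

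Next I would identify the tangent and obstruction data on both sides. By fact (ii) of \S\ref{dgladef} and the Dolbeault isomorphism, $t_{\Def_{KS_X}} = H^1(KS_X^\bullet) \cong H^1(X,\Theta_X)$, while classical first-order deformation theory gives $t_{\Def_X}\cong H^1(X,\Theta_X)$; I would verify that $\gamma$ induces the canonical identification, hence a bijection on tangent spaces. For obstructions, fact (iii) of \S\ref{dgladef} provides a complete obstruction theory for $\Def_{KS_X}$ with obstruction space $H^2(KS_X^\bullet)\cong H^2(X,\Theta_X)$, and $\Def_X$ carries its standard obstruction theory with obstruction space $H^2(X,\Theta_X)$; I would produce the comparison map $\psi$ and check that it is injective and compatible with the obstruction maps, i.e.\ $\ob_{\Def_X}\circ\gamma = \psi\circ\ob_{\Def_{KS_X}}$.

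With these in hand, Proposition \ref{criteria} (with the bijection option in (i), the injective $\psi$ of (ii), and the completeness of (iii)) shows that $\gamma$ is \'etale. A smooth morphism of deformation functors inducing a bijection on tangent spaces is an isomorphism, proved by induction on the length along small extensions, so $\gamma_A$ is bijective for every $A$; this gives the desired isomorphism $\gamma: \Def_{KS_X}\to\Def_X$. I expect the main obstacle to be the careful construction of $\gamma$ in the second step: making precise the passage from a Maurer-Cartan solution to a flat lifting $\widetilde X$, proving flatness over $A$, and establishing the exact matching between gauge equivalence and isomorphism of liftings together with the compatibility of the obstruction maps. Once this dictionary is set up, the verification of the tangent- and obstruction-space data and the final application of Proposition \ref{criteria} are formal.
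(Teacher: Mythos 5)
Your plan follows the same route as the paper (the paper itself only cites Iacono for this theorem, but its proof of the stack version in \S 4 is, by the authors' own description, a transplantation of exactly this argument): build $\gamma$ by sending a Maurer--Cartan solution $x$ to the kernel of the deformed operator on $\mathscr A^{0,\bullet}_X\otimes A$, check compatibility with gauge equivalence, and then apply Proposition \ref{criteria}. However, the justification of your final step contains a genuine error: it is \emph{not} true that a smooth morphism of deformation functors inducing a bijection on tangent spaces is an isomorphism. For a small extension $0\to I\to A\to\bar A\to 0$ the tangent space acts transitively but in general \emph{not freely} on the fibres of $D(A)\to D(\bar A)$, so an \'etale morphism is a surjection of transitive sets whose stabilizers may differ, and injectivity can fail. (A toy example: a DGLA inclusion $L'\subset L$ that is an isomorphism on $H^1$ and $H^2$ but not surjective on $H^0$ induces an \'etale, non-injective map $\Def_{L'}\to\Def_L$.) This is precisely why the paper proves injectivity of $\Gamma$ separately (Proposition \ref{modgauge}) before proving \'etaleness: the correct logic is ``\'etale $\Rightarrow$ smooth $\Rightarrow$ surjective'', combined with the separately established injectivity. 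Your step asserting that gauge equivalence of Maurer--Cartan solutions corresponds exactly to isomorphism of liftings does supply that injectivity, so your argument can be closed, but not by the general principle you invoke.

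Two further points on the construction itself. First, the operator should be the contraction $\mathbi l_x$ (the derivation of $\mathscr A^{0,0}_X\otimes A$ by the $\Theta_X$-valued form $x$), not the Kodaira--Spencer bracket $[x,-]$, which acts on vector-field-valued forms. Second, the flatness of $\mathscr S_x=\Ker(\bar{\rd}+\mathbi l_x)$ over $A$ and the identification of $\mathscr S_x\otimes_A\CC$ with $\mathscr O_X$, which you correctly identify as the main obstacle but leave open, are handled in the paper by a specific device: on a Stein (e.g.\ polydisc) cover $U$ one has $H^1(U,\Theta_U)=0$, so $\Def_{KS_U}$ is trivial and there exists $a\in A^{0,0}_U(\Theta_U)\otimes m_A$ with $e^a*x=0$; the explicit gauge computation $e^a\circ(\bar{\rd}+\mathbi l_x)\circ e^{-a}=\bar{\rd}$ then identifies $\mathscr S_x|_U$ with $\mathscr O_U\otimes A$, giving flatness locally. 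Without some such local trivialization argument the flatness claim does not follow from the Maurer--Cartan equation alone.
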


\section{Deformation of a Deligne-Mumford stack}

Recently, Aoki \cite{A} explored the deformation theory of an algebraic
stack (in the sense of Artin), whose work depends on the preceeding work
by Olsson \cite{O1, O2}. In the first half of this section, 
we review some results from \cite{A}. 

We can define a deformation functor $\Def _{\mathscr X}$ 
of an algebraic stack $\mathscr X$ 
just as in (\ref{defsch}) 
for an algebraic stack $\mathscr X$, 
namely, for a local artinian $\mathbb C$-algebra $A$, 
we define $\Def _{\mathscr X}(A)$ to be the set of isomorphism 
classes of liftings $\widetilde{\mathscr X}$ of $\mathscr X$ 
to $A$
(here we note that we will only consider 1-isomorphism classes of 
liftings, so that our deformation functor is coarser than the one in 
\cite{A}, Definition 1.1).

We will write groupoid space for internal groupoid in the category of algebraic 
spaces (\cite{A}, Definition 2.1.2). 
To an algebraic stack $\mathscr X$, we can associate 
a choice of a smooth atlas $U\to \mathscr X$ and 
a groupoid space
\begin{equation}\label{gpspdiag}
\xymatrix{
T=R{}_s \underset{U}{\times} {}_t R \ar[r]^{\quad\;\;\; m} &
R \ar@(ur,ul)[]_i \ar@<2ex>[r]^s \ar[r]^t &
U \ar@<1ex>[l]^e
}
\end{equation}
such that $s$ and $t$ are smooth and $(s,t): R\to U\times U$ is 
quasi-compact and separated. Conversely, if we are given 
a groupoid space $R\rightrightarrows U$ with these properties, we can recover the 
algebraic stack $\mathscr X$. 

For a groupoid space $R\rightrightarrows U$, we can naturally define its associated 
deformation functor  $\Def _{(R\rightrightarrows U)} : \mbox{\bf Art} \to \mbox{\bf Set}$ 
in an obvious way. Aoki \cite{A} showed that the correspondence between algebraic stacks 
and groupoid spaces induces an isomorphism of these deformation functors. 

\begin{theorem}[\cite{A}, Proposition 3.2.5]\label{aoki}
Let $X$ be an algebraic stack and $R\rightrightarrows U$ be an associated groupoid space. 
Then there is a natural isomorphism of functors 
\[
C:\Def _{(R\rightrightarrows U)} \to \Def _{\mathscr X}.
\]
\end{theorem}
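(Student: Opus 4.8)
The plan is to construct $C$ explicitly by passing to quotient stacks, and then to build an inverse using the fact that the atlas $U\to\mathscr X$ is smooth. First I would make $C$ precise. Given a lifting of the groupoid space over $A$---that is, a diagram $\widetilde R\rightrightarrows\widetilde U$ of algebraic spaces flat over $A$ with structure maps $\widetilde s,\widetilde t,\widetilde e,\widetilde i,\widetilde m$ deforming those of \eqref{gpspdiag} and restricting to $R\rightrightarrows U$ over $\CC$---I would form the associated quotient stack $\widetilde{\mathscr X}=[\widetilde U/\widetilde R]$. Since the deformed source and target maps $\widetilde s,\widetilde t$ are smooth and the remaining maps stay quasi-compact and separated, $\widetilde{\mathscr X}$ is again an algebraic stack, flat over $A$, whose fiber over $\CC$ is $\mathscr X=[U/R]$. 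This produces a lifting of $\mathscr X$, and $C$ sends the isomorphism class of $\widetilde R\rightrightarrows\widetilde U$ to that of $\widetilde{\mathscr X}$. Naturality in $A$ is immediate, as both the quotient construction and base change along a morphism $A\to A'$ commute.

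Next I would produce the inverse. Given a lifting $\widetilde{\mathscr X}$ of $\mathscr X$ over $A$, the key input is that $U\to\mathscr X$ is smooth, hence formally smooth; because $\mathscr X\hookrightarrow\widetilde{\mathscr X}$ is a nilpotent thickening, the smooth atlas lifts to a smooth atlas $\widetilde U\to\widetilde{\mathscr X}$ deforming $U$. I would then set $\widetilde R=\widetilde U\times_{\widetilde{\mathscr X}}\widetilde U$, which automatically inherits the groupoid structure maps by functoriality of the fiber product. Flatness of $\widetilde U$ and $\widetilde R$ over $A$ follows from the smoothness of the atlas together with flatness of $\widetilde{\mathscr X}\to\Spec A$. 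This assigns to $\widetilde{\mathscr X}$ a lifting of the groupoid space $R\rightrightarrows U$.

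Finally I would check that the two assignments are mutually inverse on isomorphism classes. One composite is essentially tautological: reconstructing $\widetilde{\mathscr X}$ as $[\widetilde U/\widetilde R]$ from the lifted atlas $\widetilde U$ recovers the original deformation by descent. The reverse composite is the delicate point and the main obstacle: I must show that the groupoid deformation is independent, up to isomorphism, of the chosen lift $\widetilde U$ of the atlas. Two such lifts are related by a smooth correspondence, and comparing their associated groupoid spaces requires deformation-theoretic control of smooth morphisms---precisely the machinery of Olsson's cotangent complex for stacks \cite{O1, O2} together with the formal smoothness of $U\to\mathscr X$. Once this independence is established, well-definedness of the inverse follows, and the two composites are seen to be the identity. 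Throughout, restricting attention to $1$-isomorphism classes (as remarked above) lets me avoid tracking the full $2$-categorical coherence data, so the bookkeeping reduces to ordinary faithfully flat descent.
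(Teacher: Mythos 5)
Note first that the paper does not prove this statement: Theorem \ref{aoki} is imported verbatim from Aoki (\cite{A}, Proposition 3.2.5), so there is no in-paper argument to compare yours against. Judged on its own merits, your overall architecture (quotient stack in one direction, lift the atlas in the other) is the natural one, but the construction of the inverse rests on a step that is not justified and, as stated, fails. You write that because $U\to\mathscr X$ is smooth, ``hence formally smooth,'' and $\mathscr X\hookrightarrow\widetilde{\mathscr X}$ is a nilpotent thickening, ``the smooth atlas lifts.'' Formal smoothness of $U\to\mathscr X$ is a lifting property for maps \emph{into} $U$ from infinitesimal extensions of affine schemes over $\mathscr X$; it does not produce a flat deformation $\widetilde U$ of the total space $U$ over the thickened base $\widetilde{\mathscr X}$. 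The existence of such a deformation is genuinely obstructed: by Olsson's deformation theory of representable morphisms (the machinery you invoke only later, for well-definedness), the obstruction lies in $\Ext^2(L_{U/\mathscr X}, I\cdot\mathscr O)\cong H^2(U,\Theta_{U/\mathscr X}\otimes I)$, which need not vanish for an arbitrary smooth atlas $U$. Since the theorem is stated for \emph{any} associated groupoid space $R\rightrightarrows U$, you must either choose $U$ to be a disjoint union of affines (so that this $H^2$ and the corresponding $H^1$ vanish) and separately prove independence of the presentation, or find another route; as written, the inverse never gets off the ground.

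The same vanishing is what would rescue the two points you defer. Uniqueness of the atlas lift up to isomorphism (needed both for well-definedness of the inverse and for the composite $\Def_{(R\rightrightarrows U)}\to\Def_{\mathscr X}\to\Def_{(R\rightrightarrows U)}$ to be the identity) is a torsor statement under $H^1(U,\Theta_{U/\mathscr X}\otimes I)$, again requiring a cohomologically trivial atlas; you acknowledge this is ``the delicate point'' but leave it entirely to the cited machinery without setting up the induction over small extensions that makes it work. Two smaller omissions: you should note that $\widetilde U\to\widetilde{\mathscr X}$ remains surjective because a nilpotent thickening does not change the underlying topological space, and that an isomorphism of the quotient stacks $[\widetilde U_1/\widetilde R_1]\cong[\widetilde U_2/\widetilde R_2]$ is a priori only a $1$-morphism of stacks, not a morphism of the chosen presentations, so injectivity of $C$ requires the same atlas-lifting argument applied to the composite $\widetilde U_1\to\widetilde{\mathscr X}_1\cong\widetilde{\mathscr X}_2$ rather than ``faithfully flat descent'' alone.
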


As a corollary, we get the following results. 

\begin{corollary}[\cite{A}, Corollary 3.2.6]\label{squarefree}
 Let $A\to\bar A$ be an extension with square zero ideal $I=\Ker
 (A\to \bar A)$ and $\mathscr X$ be an algebraic stack flat over
 $S=\Spec \bar A$. Denote by $p:X_{\bullet}\to S$ 
 the associated simplicial space over $S$ to a groupoid representation 
 $X_0=U\to \mathscr X$ . Then,  
  (i) There is an obstruction class 
	$\omega\in \Ext ^2(L_{X_{\bullet}/S}, p^* I)$ to the existence
	of a lifting of $\mathscr X$ to $A$, and 
  (ii) if $\omega$ vanishes, the set of isomorphism classes of liftings of
	$\mathscr X$ to $A$ is a torsor under 
	$\Ext ^1(L_{X_{\bullet}/S}, p^* I)$, 
 where $L_{X_{\bullet}/S}$ is the cotangent complex associated to the
 ringed \'etale topos on $X_{\bullet}$ \cite{Il} and $\Ext$ groups are
 also computed on the \'etale site.  
\end{corollary}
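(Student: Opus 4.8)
The plan is to deduce both statements from the isomorphism of deformation functors established in Theorem \ref{aoki}, combined with Illusie's general deformation theory of ringed topoi applied to the simplicial space $X_{\bullet}$.

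First I would use Theorem \ref{aoki} to replace the deformation problem for the stack $\mathscr X$ by the corresponding problem for the groupoid space $R\rightrightarrows U$, and hence for the associated simplicial space $p:X_{\bullet}\to S$. Since the isomorphism $C$ of Theorem \ref{aoki} is functorial in $A$, it is compatible with the small extension $A\to\bar A$, so it suffices to produce the obstruction class and the torsor structure for flat liftings of $X_{\bullet}$ over this square-zero extension and then transport them through $C$.

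Next I would regard the ringed \'etale topos attached to $X_{\bullet}$ as in \cite{Il} and view the flat morphism $p$ as a morphism of ringed topoi. For a square-zero extension with ideal $I$, lifting $X_{\bullet}$ to a flat simplicial space over $\Spec A$ is precisely the deformation problem to which Illusie's fundamental theorem (\cite{Il}, Ch.\ III) applies. That theorem furnishes a canonical obstruction class $\omega\in\Ext^2(L_{X_{\bullet}/S},p^*I)$ whose vanishing is equivalent to the existence of a flat lifting, and, when $\omega=0$, identifies the set of isomorphism classes of such liftings with a torsor under $\Ext^1(L_{X_{\bullet}/S},p^*I)$ (the automorphisms of a fixed lifting being governed by $\Ext^0$). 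Pulling these two statements back through $C$ yields (i) and (ii).

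The main point requiring care is the verification that deforming $X_{\bullet}$ as a ringed topos genuinely coincides with deforming it as a groupoid space: one must check that the face, degeneracy, composition, and inverse maps deform compatibly, so that no obstruction or ambiguity beyond what the simplicial cotangent complex $L_{X_{\bullet}/S}$ already records can appear, and that the resulting obstruction class and torsor structure match under the identification $C$. This compatibility, together with the flatness bookkeeping inherent in passing from $\bar A$ to $A$, is exactly where the interaction between Aoki's comparison and Illusie's simplicial theory must be made precise; once it is, the statement follows formally.
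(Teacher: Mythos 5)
Your proposal is correct and follows essentially the route the paper intends: the paper gives no independent proof but simply quotes this as \cite{A}, Corollary 3.2.6, deduced from the comparison isomorphism of Theorem \ref{aoki} together with Illusie's deformation theory of the ringed topos of the simplicial space, which is exactly the combination you describe. The compatibility issue you flag (deforming $X_{\bullet}$ as a ringed topos versus as a groupoid space) is precisely what Aoki's argument addresses, so your reconstruction matches the cited proof.
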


\begin{corollary}[\cite{A}, \S 4.2]
 The functor $\Def _{\mathscr X}$, and therefore $\Def _{(R\rightrightarrows
 U)}$, is a deformation functor in the sense of (\ref{gendef}). 
\end{corollary}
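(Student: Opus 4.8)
The plan is to verify the two conditions of Definition \ref{gendef} directly for $D=\Def_{\mathscr X}$; the assertion for $\Def_{(R\rightrightarrows U)}$ then follows at once, since by Theorem \ref{aoki} the two functors are isomorphic and the defining conditions are preserved under isomorphisms of functors. The only geometric input needed is Corollary \ref{squarefree}, which I would first recast in the framework of (\ref{genobs}). Since any ideal $I$ appearing in a small extension is a $\mathbb C$-vector space annihilated by the maximal ideal, base change for the cotangent complex of the simplicial space $X_{\bullet}$ gives $\Ext^j(L_{X_{\bullet}/S},p^*I)\cong \Ext^j(L_{X_{\bullet,0}},\mathcal O)\otimes_{\mathbb C} I$, where $X_{\bullet,0}$ is the central fibre. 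Thus $\Def_{\mathscr X}$ carries a complete obstruction theory with obstruction space $V=\Ext^2(L_{X_{\bullet,0}},\mathcal O)$, and for each small extension $\alpha:0\to I\to A\to\bar A\to 0$ the set of liftings of a given $\bar x\in\Def_{\mathscr X}(\bar A)$, when nonempty, is a torsor under $\Ext^1:=\Ext^1(L_{X_{\bullet,0}},\mathcal O)$ tensored with $I$.

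For condition (i) I would first reduce to the case where $B\to A$ is a small extension: a general surjection factors into a chain of small extensions, and surjectivity of $\eta$ composes along the identity $B\times_A C=B\times_{B'}(B'\times_A C)$. So assume $\alpha:0\to I\to B\to A\to 0$ is small; it yields the small extension $\alpha':0\to I\to B\times_A C\to C\to 0$ together with the morphism of extensions $\alpha'\to\alpha$ whose component on $I$ is the identity and whose base component is $C\to A$. Given $(x_B,x_C)$ in $\Def_{\mathscr X}(B)\times_{\Def_{\mathscr X}(A)}\Def_{\mathscr X}(C)$ with common image $\bar x\in\Def_{\mathscr X}(A)$, functoriality of the obstruction class gives $\ob(\alpha)(\bar x)=\ob(\alpha')(x_C)$; the left side vanishes because $x_B$ lifts $\bar x$, so by completeness $x_C$ lifts to some $\tilde x\in\Def_{\mathscr X}(B\times_A C)$. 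Its image $x_B'$ in $\Def_{\mathscr X}(B)$ and $x_B$ are two liftings of $\bar x$ along $\alpha$, hence differ by a unique $w\in\Ext^1\otimes I$; replacing $\tilde x$ by $w\cdot\tilde x$ leaves its image in $\Def_{\mathscr X}(C)$ unchanged and, by equivariance of the projection for the two torsor structures, carries its image in $\Def_{\mathscr X}(B)$ to $x_B$, so $\eta(w\cdot\tilde x)=(x_B,x_C)$.

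For condition (ii), with $A=\mathbb C$, surjectivity of $\eta$ is the case just treated. For injectivity I would induct on the length of $B$. The case $B=\mathbb C$ is trivial, as then $B\times_{\mathbb C}C=C$. For the inductive step choose a small extension $0\to I\to B\to B'\to 0$, inducing the small extension $0\to I\to B\times_{\mathbb C}C\to B'\times_{\mathbb C}C\to 0$. If $\xi_1,\xi_2$ have the same image $(x_B,x_C)$, their images in $\Def_{\mathscr X}(B'\times_{\mathbb C}C)$ agree by the inductive hypothesis, so $\xi_1$ and $\xi_2$ are two liftings of a single element along a small extension, whence $\xi_2=v\cdot\xi_1$ for a unique $v\in\Ext^1\otimes I$. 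Projecting to $\Def_{\mathscr X}(B)$ and using freeness of the torsor action together with $\eta_B(\xi_1)=\eta_B(\xi_2)=x_B$ forces $v=0$, so $\xi_1=\xi_2$.

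I expect the genuine work to lie not in these two elementary d\'evissage arguments but in the functoriality statements I am using freely: that the obstruction class of Corollary \ref{squarefree} is natural under morphisms of small extensions, and that the projection maps are equivariant for the $\Ext^1$-torsor structures attached to a common ideal $I$. Both rest on the base-change and functoriality properties of the cotangent complex $L_{X_{\bullet}/S}$ of the simplicial space associated to $\mathscr X$ and on the compatibility of the $\Ext$-computations on the \'etale site established by Olsson and Aoki. Pinning down this naturality in the simplicial, stacky setting is the main obstacle, and it is exactly the point at which one must invoke \cite{A,O1} rather than argue by hand.
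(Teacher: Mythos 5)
The paper offers no argument here at all: the statement is given purely as a citation of \cite{A}, \S 4.2, where Aoki verifies Schlessinger-type conditions for $\Def_{\mathscr X}$. Your proposal instead reconstructs a proof from Corollary \ref{squarefree}, and the reconstruction is essentially sound: it is the standard d\'evissage showing that any functor of artinian rings equipped with a complete obstruction theory, and whose sets of liftings along small extensions are torsors under $V'\otimes I$ for a fixed vector space $V'$, automatically satisfies conditions (i) and (ii) of (\ref{gendef}) (this is the mechanism behind \cite{F-M} and \cite{M}). Your reductions are correct: the factorization $B\times_A C=B\times_{B'}(B'\times_A C)$ handles general surjections, the morphism of small extensions $\alpha'\to\alpha$ with $\varphi_I=\mathrm{id}$ gives $\ob(\alpha)(\bar x)=\ob(\alpha')(x_C)$, and the torsor correction $w$ and the freeness argument for injectivity over $A=\mathbb C$ are the right steps. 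You are also right to flag where the genuine content sits; two points deserve emphasis. First, the identification $\Ext^j(L_{X_\bullet/S},p^*I)\cong\Ext^j(L_{X_{\bullet,0}},\mathcal O)\otimes I$ (needed to have a single obstruction space $V$ and a single torsor group across all small extensions, as Definition \ref{genobs} demands) rests on flat base change for the cotangent complex, which is exactly the input from \cite{Il} and \cite{O1}. Second, the torsor in Corollary \ref{squarefree} is a torsor on the set of \emph{isomorphism classes} of liftings; the freeness you invoke in the injectivity step is therefore part of what Illusie--Aoki prove and not something one gets for free in the presence of automorphisms. Granting those citations, your argument is a valid substitute for the reference; what the paper's route buys is brevity, what yours buys is an explicit verification that makes visible exactly which properties of the cotangent-complex formalism are being used.
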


If we restrict to a Deligne-Mumford stack (DM stack in short), 
we can take an \'etale atlas $U\to \mathscr X$ so that all the projections
of the simplicial scheme $X_{\bullet}$ are \'etale. The transitivity
of the cotangent complex (II.2.1.5.6 in \cite{Il}) and the vanishing of
the cotangent complex for \'etale morphisms 
(Proposition III.3.1.1 in \cite{Il}) imply that $L_{X_{\bullet}/S}$
descends to the cotangent complex $L_{\mathscr X/S}$ on the \'etale site
of $\mathscr X$. This implies the following corollary:

\begin{corollary}\label{dmcase}
 For a DM stack $\mathscr X$, 
 the liftings of $f:\mathscr X\to S$ with square zero ideal 
 $I$ are controlled by the Ext groups on the \'etale site 
 $\Ext ^i(L_{\mathscr X/S},f^*I),\; (i=1,2)$. In
 particular, the deformation functor 
 $\Def _{\mathscr X}$ is isomorphic to the deformation functor of 
 the structure sheaf $\mathscr O_{\mathscr X}$, i.e., the functor 
 \[
 \Def _{\mathscr O_{\mathscr X}}(A)=
 \left\{\begin{matrix}
	\mbox{isomorphism class of a sheaf of algebras $\mathscr S$ flat over $A$}\\
  \mbox{on the \'etale site over $\mathscr X$ such that 
  $\mathscr S\otimes _A \mathbb C\cong \mathscr O_{\mathscr X}$}
  \end{matrix}\right\}. 
 \]
\end{corollary}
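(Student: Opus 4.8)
The plan is to unwind the two claims in Corollary \ref{dmcase} separately, the first being almost immediate from the preceding discussion and the second requiring a genuine identification of two deformation-theoretic setups.

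First I would establish the Ext-control statement. The text immediately above the corollary has already shown that for a DM stack one may choose an étale atlas, that the cotangent complex $L_{X_\bullet/S}$ of the associated simplicial scheme descends to a cotangent complex $L_{\mathscr X/S}$ on the étale site of $\mathscr X$, and that the descent uses transitivity and the vanishing of the cotangent complex for étale maps. Combining this with Corollary \ref{squarefree}, which phrases obstructions and the torsor structure in terms of $\Ext^i(L_{X_\bullet/S}, p^*I)$ on the simplicial site, I would argue that these Ext groups are canonically isomorphic to $\Ext^i(L_{\mathscr X/S}, f^*I)$ on the étale site of $\mathscr X$ for $i=1,2$. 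The key point is that descent of the cotangent complex along the (étale, hence cohomologically trivial) simplicial maps induces an isomorphism on the relevant Ext groups; once this is in place, the obstruction class and the torsor of liftings transport verbatim, giving the first assertion.

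For the second assertion I would set up a natural transformation between $\Def_{\mathscr X}$ and $\Def_{\mathscr O_{\mathscr X}}$ and show it is an isomorphism. In one direction, a lifting $\widetilde{\mathscr X}\to\Spec A$ produces, by pushing forward its structure sheaf to the étale site of $\mathscr X$, a flat sheaf of $A$-algebras $\mathscr S$ restricting to $\mathscr O_{\mathscr X}$; conversely, from such an $\mathscr S$ one reconstructs a ringed étale topos, which for a DM stack determines the lifted stack. I would check that this assignment respects isomorphism classes in both directions, so that we obtain mutually inverse maps of functors. The crucial input making this reconstruction work is precisely that a DM stack is determined by its étale topos together with its structure sheaf, so that deforming the stack is the same as deforming the sheaf of algebras $\mathscr O_{\mathscr X}$ on a fixed étale site.

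The main obstacle I expect is the second direction of this last identification: producing a well-defined deformed stack from an abstract flat sheaf of algebras $\mathscr S$, rather than merely matching infinitesimal data. The comparison of Ext groups (first assertion) is formal once descent of $L_{\mathscr X/S}$ is granted, and indeed it is recorded as the easy half. But to conclude that $\Def_{\mathscr X}\cong\Def_{\mathscr O_{\mathscr X}}$ as \emph{functors}, and not just that both have the same tangent and obstruction data, I must verify that the étale-sheaf-of-algebras datum faithfully captures the stack lifting, including the compatibility of the marking isomorphisms and the absence of extra automorphisms. This is where I would lean most heavily on the étale-local structure theory of DM stacks and on the compatibility, already built into Corollaries \ref{squarefree} and \ref{dmcase}, between the simplicial description and the étale-site description; making that compatibility into an honest equivalence of the two deformation problems is the heart of the argument.
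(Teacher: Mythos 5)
Your proposal follows essentially the same route as the paper, which derives the corollary directly from the preceding paragraph: the cotangent complex $L_{X_\bullet/S}$ descends to $L_{\mathscr X/S}$ on the \'etale site (via transitivity and vanishing for \'etale morphisms), the Ext groups transport accordingly from Corollary \ref{squarefree}, and the invariance of the \'etale topos under infinitesimal thickening identifies deformations of the stack with deformations of the structure sheaf. Your more explicit treatment of the reconstruction of $\widetilde{\mathscr X}$ from $\mathscr S$ is a reasonable elaboration of what the paper leaves implicit (and is echoed in the Remark following Proposition \ref{modgauge}, where the lifted groupoid is built by hand from $\mathscr S_x$).
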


In the rest of this section, 
we review some comparison results of GAGA type as much as we need. 

In the definition of algebraic stacks, if we replace the category of schemes 
by the category of analytic spaces, 
we get the concept of 
\emph{analytic stacks} in Artin's sense, or Deligne--Mumford's sense.
Given an algebraic stack $\mathscr X$, we have the associated 
analytic stack $\mathscr X_{an}$; one way to see this is to take a
groupoid $R\rightrightarrows U$ representing $\mathscr X$ and take 
the associated groupoid of analytic spaces $R _{an}\rightrightarrows
U_{an}$, which induces $\mathscr X_{an}$. 
Similarly, we have the concept of liftings of an analytic stack to a local 
artinian ring and if we have a lifting $\widetilde {\mathscr X}$ of an algebraic stack 
$\mathscr X$ to $A$, we have the corresponding lifting of analytic stack 
$\widetilde {\mathscr X}_{an}$ of $\mathscr X_{an}$ to $A$, i.e., we have 
a natural transformation
\[
 \alpha :\Def _{\mathscr X}\to \Def _{\mathscr X_{an}}. 
\]
Note that the proof of Theorem \ref{aoki} holds true for
analytic stacks by the same proof as in \cite{A,Il,O1}; therefore,  
Corollary \ref{dmcase} also holds true for analytic DM stacks. 

\begin{proposition}\label{gaga1}
 Let $\mathscr X$ be a DM stack proper over an affine $\mathbb C$-scheme
 $S$ and $\mathscr F$ be a
 coherent sheaf on $\mathscr X$. Then we have a natural isomorphism
 \[
 H^p(\mathscr X, \mathscr F)\cong H^p(\mathscr X_{an},\mathscr F_{an}). 
 \]
\end{proposition}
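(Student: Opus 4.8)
The plan is to reduce the statement to the classical GAGA theorem on a proper algebraic space by passing to the coarse moduli space. Since $\mathscr X$ is a separated DM stack proper over the affine scheme $S$, the Keel--Mori theorem provides a coarse moduli space $\pi : \mathscr X \to X$ with $X$ a proper algebraic space over $S$, and the analogous construction after analytification gives $\pi_{an} : \mathscr X_{an} \to X_{an}$. First I would record that, because we work over $\CC$, every stabilizer group is finite of order invertible in the base; hence $\pi$ is cohomologically trivial, i.e. $\pi_* \mathscr F$ is coherent and $R^q \pi_* \mathscr F = 0$ for $q > 0$, and the same holds on the analytic side. By the Leray spectral sequence this collapses to give isomorphisms
\[
H^p(\mathscr X, \mathscr F) \cong H^p(X, \pi_* \mathscr F), \qquad
H^p(\mathscr X_{an}, \mathscr F_{an}) \cong H^p\big(X_{an}, (\pi_{an})_* \mathscr F_{an}\big).
\]

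Next I would invoke two comparison results on the level of the coarse space. The first is GAGA for proper algebraic spaces: for a coherent sheaf $\mathscr G$ on the proper algebraic space $X$ over $\CC$, the natural map $H^p(X, \mathscr G) \to H^p(X_{an}, \mathscr G_{an})$ is an isomorphism; this is the extension of Serre's theorem to algebraic spaces and can be cited directly, or deduced from the scheme case via Chow's lemma. The second is the compatibility of the coarse-space pushforward with analytification, $(\pi_* \mathscr F)_{an} \cong (\pi_{an})_* (\mathscr F_{an})$. This I would check \'etale-locally on $X$: over such a chart $\mathscr X$ takes the form $[V/G]$ with $V$ affine and $G$ finite, the functor $\pi_*$ is identified with the formation of $G$-invariants, and taking $G$-invariants of a coherent sheaf manifestly commutes with analytification since $|G|$ is invertible. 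Combining the displayed isomorphisms with these two facts applied to $\mathscr G = \pi_* \mathscr F$ yields the desired isomorphism $H^p(\mathscr X, \mathscr F) \cong H^p(\mathscr X_{an}, \mathscr F_{an})$, and its naturality is inherited from the naturality of each step.

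The main obstacle is the compatibility statement $(\pi_* \mathscr F)_{an} \cong (\pi_{an})_* (\mathscr F_{an})$, since a priori algebraic and analytic direct images need not agree. What saves the situation is that $\pi$ is proper, so one is really invoking the relative form of GAGA---Grothendieck's comparison theorem identifying algebraic and analytic higher direct images along a proper morphism---rather than an arbitrary base-change assertion. I would therefore verify carefully that the local description of $\pi_*$ as $G$-invariants glues to a global comparison isomorphism and that this isomorphism is compatible with the Leray edge maps used above. The remaining inputs, namely the exactness of $\pi_*$ in characteristic zero and GAGA for proper algebraic spaces, are standard and may be quoted.
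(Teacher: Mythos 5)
Your argument is correct, but it takes a genuinely different route from the paper. The paper's proof is a one-line reduction in the classical Grothendieck style: Chow's Lemma for DM stacks (\cite{L-M}, Corollaire 16.6.1) produces a proper, surjective, generically finite cover by a quasi-projective scheme, and the d\'evissage technique of \cite{EGA3} then reduces the comparison of cohomology to Serre's original GAGA theorem \cite{Se}. You instead factor through the coarse moduli space $\pi:\mathscr X\to X$ given by Keel--Mori, use the characteristic-zero tameness of the stack to kill the higher direct images $R^q\pi_*\mathscr F$ (so that Leray degenerates to $H^p(\mathscr X,\mathscr F)\cong H^p(X,\pi_*\mathscr F)$), and then invoke GAGA for the proper algebraic space $X$ together with the compatibility $(\pi_*\mathscr F)_{an}\cong(\pi_{an})_*(\mathscr F_{an})$, which you rightly identify as the delicate point and verify via the \'etale-local description $[V/G]$ and the Reynolds projector. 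Both arguments are sound. Yours buys a more structural picture and isolates exactly where characteristic zero enters, but it needs heavier inputs --- Keel--Mori, the local quotient structure theorem, and GAGA for proper algebraic spaces, the last of which is itself usually proved by the very Chow-plus-d\'evissage argument the paper quotes, now applied to $X$ rather than to $\mathscr X$. The paper's route is shorter and stays entirely within the classical toolkit; it also works verbatim in situations where tameness fails, whereas your exactness of $\pi_*$ is special to characteristic zero (or to tame stacks).
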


\begin{proof}
 This is standard. Chow's Lemma for DM stacks (\cite{L-M}, Corollaire
 16.6.1) and the d\'evissage technique of Grothendieck (\cite{EGA3}, \S 3, or
 \cite{Gr}) reduces the proof to the classical case \cite{Se}. 
\end{proof}

\begin{proposition}\label{gaga2}
 The natural transformation $\alpha :\Def _{\mathscr X}\to \Def
 _{\mathscr X_{an}}$ is an isomorphism if $\mathscr X$ is a smooth DM
 stack proper over $\mathbb C$. 
\end{proposition}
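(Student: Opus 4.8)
The plan is to apply the comparison criterion of Proposition \ref{criteria} to the natural transformation $\alpha$, after identifying the tangent and obstruction spaces of both functors with coherent cohomology groups to which Proposition \ref{gaga1} applies. By Corollary \ref{dmcase}, for a small extension with square-zero ideal $I$ the functor $\Def_{\mathscr X}$ is controlled by the Ext groups $\Ext^i(L_{\mathscr X/\mathbb C}, f^*I)$ on the \'etale site $(i=1,2)$; the same holds for $\Def_{\mathscr X_{an}}$ with the analytic cotangent complex, since, as remarked above, Corollary \ref{dmcase} carries over verbatim to analytic DM stacks. Thus both functors come equipped with a complete obstruction theory in the sense of (\ref{genobs}).

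First I would make these groups explicit. Since $\mathscr X$ is smooth, its cotangent complex reduces to the locally free sheaf of differentials, $L_{\mathscr X/\mathbb C}\simeq \Omega^1_{\mathscr X}$ placed in degree $0$, and likewise $L_{\mathscr X_{an}}\simeq \Omega^1_{\mathscr X_{an}}$. Writing $\Theta_{\mathscr X}=(\Omega^1_{\mathscr X})^{\vee}$ for the tangent sheaf and taking $S=\Spec \mathbb C$, so that $f^*I=\mathscr O_{\mathscr X}\otimes_{\mathbb C} I$, one gets
\[
\Ext^i(L_{\mathscr X/\mathbb C}, f^*I)\cong H^i(\mathscr X,\Theta_{\mathscr X})\otimes_{\mathbb C} I,
\]
and analogously on the analytic side. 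In particular the tangent space to $\Def_{\mathscr X}$ is $H^1(\mathscr X,\Theta_{\mathscr X})$ and its obstruction space is $H^2(\mathscr X,\Theta_{\mathscr X})$, and the same with the subscript $an$ for $\Def_{\mathscr X_{an}}$. Because $\alpha$ sends a lifting to its analytification, it is compatible with these obstruction theories: on tangent and obstruction spaces it induces the natural analytification maps $H^i(\mathscr X,\Theta_{\mathscr X})\to H^i(\mathscr X_{an},\Theta_{\mathscr X,an})$, and it intertwines the obstruction maps and the torsor structures on the sets of liftings.

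The key input is now Proposition \ref{gaga1}. The tangent sheaf $\Theta_{\mathscr X}$ is coherent (indeed locally free, by smoothness) and $\mathscr X$ is proper over $\mathbb C$, so \ref{gaga1} yields isomorphisms $H^i(\mathscr X,\Theta_{\mathscr X})\cong H^i(\mathscr X_{an},\Theta_{\mathscr X,an})$ for every $i$, in particular for $i=1,2$. Hence $\alpha$ induces a bijection on tangent spaces, verifying hypothesis (i) of Proposition \ref{criteria}, and an isomorphism --- a fortiori an injection --- on obstruction spaces commuting with the obstruction maps, verifying hypothesis (ii); hypothesis (iii), completeness of the obstruction theory of $\Def_{\mathscr X}$, holds by \ref{squarefree} and \ref{dmcase}. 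Proposition \ref{criteria} then shows that $\alpha$ is \'etale.

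It remains to upgrade \'etale to an isomorphism, which I would do by induction on the length of $A$. For a small extension $0\to I\to A\to \bar A\to 0$, an element $\bar x\in\Def_{\mathscr X}(\bar A)$ lifts to $A$ if and only if its obstruction in $H^2(\mathscr X,\Theta_{\mathscr X})\otimes I$ vanishes; since this class maps isomorphically to the obstruction of $\alpha(\bar x)$, liftability of $\bar x$ is equivalent to that of $\alpha(\bar x)$. When lifts exist, the two fibers are torsors under $H^1(\mathscr X,\Theta_{\mathscr X})\otimes I$ and $H^1(\mathscr X_{an},\Theta_{\mathscr X,an})\otimes I$ respectively, and $\alpha$ is equivariant for the isomorphism between these groups; together with the inductive bijection over $\bar A$ this forces bijectivity of $\alpha$ over $A$. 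I expect the main obstacle to be not this formal conclusion but the careful verification that $\alpha$ is genuinely a morphism of the two cotangent-complex obstruction theories --- that analytification commutes with the identification of liftings as Ext classes and with the torsor actions --- since this is where the algebraic and analytic deformation theories must be matched; once that compatibility is in place, Proposition \ref{gaga1} does the real work.
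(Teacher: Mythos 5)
Your proposal is correct and follows essentially the same route as the paper: identify the tangent and obstruction spaces of both functors with $H^i(\Theta)$ ($i=1,2$) via Corollary \ref{dmcase} and smoothness, apply Proposition \ref{gaga1} to $\Theta_{\mathscr X}$, and conclude by the criterion of Proposition \ref{criteria}. Your closing induction upgrading \'etale to an isomorphism is a standard fact for deformation functors that the paper leaves implicit, but it is a harmless (and correct) addition.
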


\begin{proof}
 As we assumed that $\mathscr X$ is smooth, 
 the infinitesimal deformation space and 
 the obstruction space in Corollary \ref{dmcase} are
 given by  $H^i(\mathscr X,\Theta _{\mathscr X}),\; (i=1,2)$. 
 We also have the same statement for $\mathscr X_{an}$. 
 If we apply Proposition \ref{gaga1} for $\mathscr F=\Theta _{\mathscr
 X}$, we get our proposition. 
\end{proof}

We remark that as a corollary of the proposition, we get an isormorphism
of the deformation functors $\Def _{(R\rightrightarrows U)}\cong \Def
_{(R_{an}\rightrightarrows U_{an})}$. The obstruction theory in
the proof of the proposition above is in fact a complete obstruction
theory in the sense of Definition \ref{genobs}.

\begin{proposition}
 Let $\mathscr X$ be a smooth (algebraic or analytic) 
 DM stack and $\Def _{\mathscr X}$ its
 deformation functor. 
 There is a natural complete obstruction theory for $\Def _{\mathscr X}$
 with obstruction space $H^2(\mathscr X, \Theta _{\mathscr X})$. 
 If $\mathscr X$ is proper over $\mathbb C$, the obstruction theory is compatible 
 with the operation of taking the associated analytic stack.
\end{proposition}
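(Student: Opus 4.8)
The plan is to read the obstruction theory off the cotangent-complex description of Corollary~\ref{dmcase} and then simplify it using the smoothness of $\mathscr X$. Fix a small extension $\alpha:0\to I\to A\to\bar A\to 0$, so that $I\cdot m_A=0$; in particular $I$ is a $\CC$-vector space and $\alpha$ is a square-zero extension. Given $\bar x\in\Def_{\mathscr X}(\bar A)$, represented by a flat lifting $\mathscr X_{\bar A}\to\Spec\bar A$ with central fibre $\mathscr X$, Corollary~\ref{squarefree} (and its refinement Corollary~\ref{dmcase}) produces an obstruction class $\omega(\bar x)\in\Ext^2(L_{\mathscr X_{\bar A}/\bar A},f^*I)$ that vanishes if and only if $\bar x$ lifts to $\Def_{\mathscr X}(A)$. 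First I would identify this receptacle with $H^2(\mathscr X,\Theta_{\mathscr X})\otimes I$, independently of $\bar x$. Since $\mathscr X$ is smooth and $\mathscr X_{\bar A}$ is flat over $\bar A$ with this central fibre, $\mathscr X_{\bar A}$ is smooth over $\bar A$, so $L_{\mathscr X_{\bar A}/\bar A}\simeq\Omega^1_{\mathscr X_{\bar A}/\bar A}$ is locally free; because $I$ is a $\CC$-module, $f^*I$ is supported on the central fibre, and the local-to-global spectral sequence degenerates, yielding $\Ext^2(L_{\mathscr X_{\bar A}/\bar A},f^*I)\cong H^2(\mathscr X,\Theta_{\mathscr X})\otimes I$. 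I then set $V=H^2(\mathscr X,\Theta_{\mathscr X})$ and declare $\ob(\alpha)(\bar x)=\omega(\bar x)$ under this identification, noting that $V$ depends only on $\mathscr X$ and not on $\bar A$ or on $\bar x$.

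With the obstruction space and map in hand, I would verify the axioms of an obstruction theory in the sense of (\ref{genobs}). Condition~(i), that $\ob(\alpha)(\bar x)=0$ whenever $\bar x$ lifts, and completeness, the converse implication, are exactly the content of Corollary~\ref{dmcase}. The only structural point is the functoriality condition~(ii): for a morphism $\varphi$ of small extensions one must check $\ob(\alpha_2)(\bar\varphi_*\bar x)=(\id_V\otimes\varphi_I)(\ob(\alpha_1)(\bar x))$. This is inherited from the naturality of Illusie's cotangent-complex obstruction class, which is defined as the image of the class of the extension under a connecting homomorphism and is therefore compatible with base-change morphisms of extensions; I would track this naturality through the groupoid-space presentation $R\rightrightarrows U$ and the descent of $L_{X_\bullet/S}$ to $L_{\mathscr X/S}$ on the \'etale site used in Corollary~\ref{dmcase}. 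Since Corollary~\ref{dmcase} holds verbatim for analytic DM stacks, this entire construction applies equally in the algebraic and analytic settings.

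For the last assertion, suppose $\mathscr X$ is proper over $\CC$ and run the identical construction for $\mathscr X_{an}$, whose obstruction space is $H^2(\mathscr X_{an},\Theta_{\mathscr X_{an}})$. Taking $S=\Spec\CC$, which is affine, and $\mathscr F=\Theta_{\mathscr X}$, which is coherent by smoothness, Proposition~\ref{gaga1} furnishes a canonical isomorphism $\psi:H^2(\mathscr X,\Theta_{\mathscr X})\xrightarrow{\sim}H^2(\mathscr X_{an},\Theta_{\mathscr X_{an}})$, which I take as the comparison map of obstruction spaces. I would then verify that the square relating $\ob_{\mathscr X}$, $\ob_{\mathscr X_{an}}$, the natural transformation $\alpha:\Def_{\mathscr X}\to\Def_{\mathscr X_{an}}$, and $\psi\otimes\id_I$ commutes, i.e. $\ob_{\mathscr X_{an}}\circ\alpha=(\psi\otimes\id_I)\circ\ob_{\mathscr X}$. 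The main obstacle is precisely this compatibility: it requires that analytification, viewed as a morphism of ringed \'etale topoi, be compatible with the cotangent complex and with the connecting homomorphism defining the obstruction class, so that analytification sends $\omega(\bar x)$ to $\omega(\alpha(\bar x))$. Granting the naturality of Illusie's construction under analytification, the GAGA isomorphism $\psi$ identifies the two receptacles and the square commutes, which completes the proof.
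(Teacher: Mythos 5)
Your proposal is correct and follows essentially the same route as the paper, which simply cites Aoki's Corollary 3.2.6 for the cotangent-complex obstruction class (identified with $H^2(\mathscr X,\Theta_{\mathscr X})$ via smoothness), Illusie's Th\'eor\`eme 2.1.7 for the base-change compatibility (ii), and the GAGA isomorphism of Proposition \ref{gaga1} for the analytic comparison. You have merely unpacked in detail what the paper leaves as references.
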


\begin{proof}
 This is also implied by \cite{A}, Corollary 3.2.6. 
 The compatibility with base change (ii) in
 Definition \ref{genobs} goes back to the definition of the obstruction
 class in \cite{Il}, Th\'eor\`eme 2.1.7. 
\end{proof}

\section{Kodaira-Spencer algebra associated to a smooth DM stack}

The observation in the introduction suggests that the deformation theory of 
a smooth Deligne-Mumford stack 
should also be controlled by a DGLA something like 
the Kodaira-Spencer algebra, and such a DGLA should be realized 
as a DGL sub-algebra of the Kodaira-Spencer algebra of an atlas of the stack. 

For a Deligne-Mumford stack $\mathscr X$, 
the cotangent sheaf $\Omega _{\mathscr X}$ is 
a well-defined $\mathscr O_{\mathscr X}$-coherent sheaf and 
it is locally free if $\mathscr X$ is smooth. 
Therefore, the tangent sheaf $\Theta _{\mathscr X}$ is also a locally free 
sheaf for a smooth $\mathscr X$. Similarly, on the \'etale site over
$\mathscr X_{an}$, we have the sheaf of $C^{\infty}$-differentials 
$\mathscr A^{p,q}_{\mathscr X_{an}}$. 

In the rest of the article, we will work in the analytic category. 
Moreover, we always assume that 
\emph{a DM stack is locally compact and second-countable}, 
i.e., we assume that every analytic space 
in the \'etale site of a DM stack (in particular an atlas)
is locally compact and second-countable 

Now take an \'etale atlas $U\to \mathscr X$. 
Let $R=U\times _{\mathscr X} U$ be the ``space of relations'' and
$s, t: R\to U$ the first and second projections, respectively. 
Since $U$ is a smooth space, we have the associated Kodaira-Spencer algebra 
$KS_U^{\bullet}=A^{0,\bullet}_U(\Theta _U)$. 
The \'etale morphism $s:R\to U$ induces a map 
$s^*: A^{0,p}_U(\Theta _U)\to A^{0,p}_R(\Theta _R)$
and the same holds for $t$. 

\begin{propd}
Let $\mathscr X$ be a smooth DM stack. Define $KS_{\mathscr X}^{\bullet}$ by  
\[
KS_{\mathscr X}^p = \{ x\in A^{0,p}_U(\Theta _U) \mid s^*x=t^*x\}
 \subset A^{0,p}_U(\Theta _U).
\]
Then, 
\begin{enumerate}[(i)]
\item $KS_{\mathscr X}^p$ does not depend on the choice of an atlas 
        $U\to \mathscr X$. More precisely, 
        $KS_{\mathscr X}^p$ is the space of global sections 
        $\Gamma (\mathscr X,\mathscr A^{0,p}_{\mathscr X}(\Theta _{\mathscr X}))$. 
\item $KS^{\bullet}_{\mathscr X}=\bigoplus _p KS_{\mathscr X}^p$ is 
a differential graded Lie sub-algebra of $KS_U^{\bullet}$. 
\end{enumerate}
We call $KS_{\mathscr X}$ 
the \emph{Kodaira-Spencer algebra of the DM stack} $\mathscr X$.
\end{propd}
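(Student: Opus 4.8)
The plan is to deduce both assertions from descent for sheaves on the small \'etale site of $\mathscr X_{an}$, so that everything reduces to the functoriality of the relevant sheaves and the naturality of $\bar{\rd}$ and $[-,-]$ under \'etale pullback. For (i), since $\mathscr X$ is smooth the tangent sheaf $\Theta_{\mathscr X}$ is locally free and, together with the sheaf $\mathscr A^{0,p}_{\mathscr X}$ of smooth $(0,p)$-forms, defines a sheaf $\mathscr F:=\mathscr A^{0,p}_{\mathscr X}(\Theta_{\mathscr X})=\mathscr A^{0,p}_{\mathscr X}\otimes_{\mathscr O_{\mathscr X}}\Theta_{\mathscr X}$ on the \'etale site. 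The feature I would use is its functoriality: an \'etale map is a local biholomorphism, hence induces an isomorphism of tangent sheaves and pulls back $(0,p)$-forms, so for every \'etale $V\to\mathscr X$ one has $\Gamma(V,\mathscr F)=A^{0,p}_V(\Theta_V)$, with transition maps given by the usual restriction of differential forms. For an \'etale cover, the global sections of a sheaf of $\mathbb C$-vector spaces are computed as the equalizer of the two pullback maps attached to the face maps of the nerve; no higher cocycle condition is needed, because a section of a sheaf (as opposed to an object of a stack of categories) is already pinned down by the agreement $s^*x=t^*x$ on $R$. Applied to the cover $U\to\mathscr X$ with $R=U\times_{\mathscr X}U$ and face maps $s,t$, this yields
\[
\Gamma(\mathscr X,\mathscr F)=\{x\in A^{0,p}_U(\Theta_U)\mid s^*x=t^*x\}=KS_{\mathscr X}^p.
\]
As the left-hand side is intrinsic to $\mathscr X$ and involves no choice of atlas, independence of the atlas follows at once.

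For (ii), it suffices to check that $KS_{\mathscr X}^{\bullet}$ is stable under the differential $\bar{\rd}$ and the bracket $[-,-]$ of $KS_U^{\bullet}$; the graded skew-symmetry, the Jacobi identity and the Leibniz rule are then inherited from the ambient DGLA. Because $s$ and $t$ are \'etale, hence local biholomorphisms, the pullback of a holomorphic vector field along either of them is well defined and the bracket on $\Theta$-valued $(0,\bullet)$-forms is natural, so for $x,y\in KS_{\mathscr X}^{\bullet}$ one gets $s^*[x,y]=[s^*x,s^*y]=[t^*x,t^*y]=t^*[x,y]$, whence $[x,y]\in KS_{\mathscr X}^{\bullet}$. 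Likewise, since $s,t$ are holomorphic the pullback commutes with $\bar{\rd}$, and as the coefficients are holomorphic vector fields this gives $s^*\bar{\rd}x=\bar{\rd}s^*x=\bar{\rd}t^*x=t^*\bar{\rd}x$, so that $\bar{\rd}x\in KS_{\mathscr X}^{\bullet}$. Thus $KS_{\mathscr X}^{\bullet}$ is a differential graded Lie subalgebra of $KS_U^{\bullet}$.

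The conceptual heart of the argument is (i): one must have the sheaf $\mathscr A^{0,p}_{\mathscr X}(\Theta_{\mathscr X})$ correctly set up on the \'etale site (this is the Dolbeault/Behrend-type input of \cite{B}) and must know that for such a sheaf the descent condition collapses to the single equalizer condition $s^*x=t^*x$. Once this identification of $KS_{\mathscr X}^p$ with $\Gamma(\mathscr X,\mathscr F)$ is in hand, both the atlas-independence and the subalgebra property in (ii) are formal consequences of the naturality of $\bar{\rd}$ and $[-,-]$ under \'etale maps, and I do not expect any further difficulty.
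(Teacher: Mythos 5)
Your proposal is correct and follows essentially the same route as the paper: part (i) is exactly the descent (sheaf/equalizer) property of $\mathscr A^{0,p}_{\mathscr X}(\Theta_{\mathscr X})$ on the analytic-\'etale site, and part (ii) is the observation that $\bar{\rd}$ and $[-,-]$ commute with $s^*$ and $t^*$ because $s$ and $t$ are \'etale. You simply spell out in more detail what the paper states in two sentences.
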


\begin{proof}
 (i) is nothing but the descent property of 
 $\mathscr A^{0,p}_{\mathscr X}(\Theta _{\mathscr X})$  
 on the analytic-\'etale site on $\mathscr X$. 
 For (ii), it is enough to show that $KS^{\bullet}_{\mathscr X}$ 
 is closed under $\bar{\rd}$ and the bracket $[-,-]$ of
 $KS^{\bullet}_U$. This is equivalent to saying that $\bar{\rd}$ and
 $[-,-]$ commute with $s^*$ and $t^*$. But this is obvious because $s$
 and $t$ are \'etale. 
\end{proof}

\begin{example}
For a global quotient DM stack $[X/G]$, we can take $X\to [X/G]$ as an atlas. 
The proposition immediately implies that $KS_{[X/G]}^{\bullet}=(KS_X^{\bullet})^G$. 
\end{example}

Given the Kodaira-Spencer algebra $KS^{\bullet}_{\mathscr X}$ for a
smooth DM stack, we can, of course, consider its deformation functor
$\Def _{KS _{\mathscr X}}$. Its tangent space and complete obstruction
space (\ref{dgladef}) can be computed by the following Dolbeault type theorem. 

\begin{theorem}\label{dolbeault}
 Let $\mathscr X$ be a smooth separated DM stack over $\mathbb C$. 
 Then there is an isomorphism 
 $H^p(KS ^{\bullet}_{\mathscr X}, \bar{\rd}) \overset{\sim}{\to} 
 H^p(\mathscr X,\Theta _{\mathscr X})$ 
 for all $p$. 
\end{theorem}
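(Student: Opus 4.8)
The plan is to establish the isomorphism $H^p(KS^{\bullet}_{\mathscr X}, \bar{\rd}) \cong H^p(\mathscr X, \Theta_{\mathscr X})$ by realizing $KS^{\bullet}_{\mathscr X}$ as the complex of global sections of a fine (hence acyclic) resolution of $\Theta_{\mathscr X}$ on the analytic-\'etale site of $\mathscr X$, so that the theorem becomes the standard statement that a resolution by acyclic sheaves computes sheaf cohomology. Concretely, from Proposition--Definition above I already have the identification $KS^p_{\mathscr X} = \Gamma(\mathscr X, \mathscr A^{0,p}_{\mathscr X}(\Theta_{\mathscr X}))$, where $\mathscr A^{0,p}_{\mathscr X}(\Theta_{\mathscr X})$ is the descended sheaf of $(0,p)$-forms with values in $\Theta_{\mathscr X}$. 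So the first step is to write down the Dolbeault complex of sheaves
\[
0 \lto \Theta_{\mathscr X} \lto \mathscr A^{0,0}_{\mathscr X}(\Theta_{\mathscr X}) \overset{\bar{\rd}}{\lto} \mathscr A^{0,1}_{\mathscr X}(\Theta_{\mathscr X}) \overset{\bar{\rd}}{\lto} \cdots
\]
on the analytic-\'etale site, and to verify that it is exact as a complex of sheaves, i.e. a resolution of $\Theta_{\mathscr X}$.

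First I would check exactness of this complex of sheaves. Since \'etale-locally over $\mathscr X$ we are looking at an honest smooth analytic space (the atlas $U$, or an \'etale open thereof), the complex restricts to the ordinary $\Theta$-valued Dolbeault complex on a smooth manifold, whose exactness in positive degrees is the classical $\bar{\rd}$-Poincar\'e lemma (the Dolbeault--Grothendieck lemma applied to the holomorphic vector bundle $\Theta$) and whose kernel in degree $0$ is precisely the sheaf of holomorphic sections of $\Theta$, i.e. $\Theta_{\mathscr X}$. Because exactness of a complex of sheaves is a stalk-local (equivalently, \'etale-local) condition, this local verification gives exactness globally on the site.

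Second, I would argue that each $\mathscr A^{0,p}_{\mathscr X}(\Theta_{\mathscr X})$ is acyclic, so that the resolution may be used to compute $H^p(\mathscr X, \Theta_{\mathscr X})$. On a paracompact manifold the sheaves $\mathscr A^{0,p}$ are fine (they are modules over the soft sheaf of $C^\infty$ functions, admitting partitions of unity), hence have vanishing higher cohomology; the hypotheses imposed in this section, that every analytic space in the \'etale site of $\mathscr X$ is locally compact and second-countable, are exactly what guarantees paracompactness and the existence of the needed partitions of unity. The one point requiring care is that softness/fineness must be promoted from the level of the atlas to the level of the stack, i.e. one must know that $\mathscr A^{0,p}_{\mathscr X}(\Theta_{\mathscr X})$ is acyclic for cohomology computed on the analytic-\'etale site of $\mathscr X$ and not merely on $U$. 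Granting acyclicity, the abstract de Rham / acyclic-resolution theorem identifies the $p$-th cohomology of the global-section complex $\Gamma(\mathscr X, \mathscr A^{0,\bullet}_{\mathscr X}(\Theta_{\mathscr X})) = KS^{\bullet}_{\mathscr X}$ with $H^p(\mathscr X, \Theta_{\mathscr X})$, which is the desired isomorphism.

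The main obstacle I anticipate is precisely this last issue of acyclicity on the stack: partitions of unity and softness are manifold-level tools, and transporting the vanishing of higher cohomology of the fine sheaves to cohomology on the analytic-\'etale site of $\mathscr X$ requires the right comparison between cohomology computed via the atlas (or the associated simplicial analytic space) and cohomology on the site. This is where the separatedness and the local-compactness/second-countability hypotheses, together with the descent formalism referenced in the Proposition--Definition, must do their work; as the statement notes, this is the part that is essentially due to Behrend \cite{B}, and I would lean on that cohomological descent machinery rather than reprove it. The purely local steps (the $\bar{\rd}$-Poincar\'e lemma and exactness of the Dolbeault complex) are routine.
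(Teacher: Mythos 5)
Your proposal is correct in outline and rests on the same two essential inputs as the paper's proof (the local $\bar{\rd}$-Poincar\'e lemma and Behrend's partitions of unity), but it packages them differently. You invoke the abstract acyclic-resolution theorem on the analytic-\'etale site: exhibit $\mathscr A^{0,\bullet}_{\mathscr X}(\Theta_{\mathscr X})$ as a resolution of $\Theta_{\mathscr X}$ (checked \'etale-locally) and then claim acyclicity of each term. The paper instead never works abstractly with site cohomology: it fixes a smooth \emph{Stein} \'etale atlas $U\to\mathscr X$ and runs an explicit \v{C}ech--Dolbeault double complex over the simplicial space $U_\bullet$, where the rows are exact by the ordinary Dolbeault theorem on the Stein spaces $U_p$, the columns are exact by Behrend's partition-of-unity result (\cite{B}, Proposition 23, after refining the atlas), the leftmost column computes $H^p(\mathscr X,\Theta_{\mathscr X})$ by Cartan's Theorem B, and the bottom row is $KS^\bullet_{\mathscr X}$; a standard zig-zag finishes the argument. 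The point you correctly flag as the main obstacle --- promoting fineness of $\mathscr A^{0,p}$ from the atlas to acyclicity on the site of the stack --- is not a formality, and the paper's double complex is precisely the mechanism that discharges it: the exactness of the columns is the concrete form of the acyclicity you need, and it is obtained from Behrend's partition of unity only after replacing $U$ by a suitable refinement. So your route is viable, but to make it complete you would either have to prove that acyclicity statement (which amounts to reproducing the paper's double complex or the cohomological-descent spectral sequence behind it) or cite it explicitly from \cite{B}; the paper's choice of a Stein atlas is what lets it sidestep any abstract comparison and keep every step elementary.
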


\begin{proof}
 Take an \'etale atlas $U\to \mathscr X$ with $U$ smooth and Stein 
 and consider the associated \v{C}ech--Dolbeault
 complex as usual:
 \begin{equation}\label{CechDolbeault}
 \xymatrix@=12pt{
 0 \ar[r] &\Gamma(U_p, \Theta )\ar[r]
 &\Gamma (U_p, \mathscr A^{0,0}(\Theta))\ar[r]
 & \cdots \ar[r] & \Gamma (U_p, A^{0,q}(\Theta))\ar[r]
 &\cdots \\
& \vdots \ar[u] & \vdots \ar[u] & & \vdots \ar[u] \\
 0 \ar[r] &\Gamma (U_1, \Theta )\ar[r]\ar[u]
 &\Gamma (U_1, \mathscr A^{0,0}(\Theta))\ar[r]\ar[u]
 & \cdots \ar[r] & \Gamma (U_1, A^{0,q}(\Theta))\ar[r]\ar[u]
 &\cdots \\
 0 \ar[r] &\Gamma (U_0, \Theta )\ar[r]\ar[u]_{t^*-s^*}
 &\Gamma (U_0, \mathscr A^{0,0}(\Theta))\ar[r]\ar[u]_{t^*-s^*}
 & \cdots \ar[r] & \Gamma (U_0, A^{0,q}(\Theta))\ar[r]\ar[u]_{t^*-s^*}
 &\cdots \\
 & 0 \ar[r]\ar[u] & KS^0_{\mathscr X}\ar[r]\ar[u]
 & \cdots \ar[r] & KS^q_{\mathscr X}\ar[r]\ar[u]\ar[r]
 & \cdots \\
 & & 0 \ar[u] & & 0\ar[u]
 }
 \end{equation}
 where $U_p$ is the $(p+1)$-fold self fiber product of $U$ over
 $\mathscr X$. The vanishing of cohomologies of coherent sheaves on a
 Stein space implies that the complex appeared in the leftmost column
 calculates $H^p(\mathscr X,\Theta _{\mathscr X})$. 
 The Dolbeault theorem on an usual complex manifold $U_p$
 implies that the rows but the bottom one is exact. On the other hand, 
 Behrend \cite{B} showed that if we replace $U$ by its `refinement', 
 we have a partition of unity associated with $U$ (\cite{B}, Definition
 22), and this immediately implies that the columns except the leftmost
 one are exact (\cite{B}, Proposition 23). A standard double complex
 argument leads to our theorem.
\end{proof}

\section{Infinitesimal Newlander-Nirenberg theorem for smooth DM stack}

In this section, we complete the proof of our Main Theorem. 
After the preparations in \S\S 2 and 3, 
our proof is an honest transplantation of the proof of Theorem
\ref{classical} found in \cite{Ia},
Chap. II to the context of smooth DM stacks. 
We begin with the following theorem. 

\begin{theorem}\label{NNmorph-p}
Let $\mathscr X$ be a
smooth separated analytic Deligne-Mumford stack and $KS_{\mathscr X}^\bullet$ 
be the associated Kodaira-Spencer algebra. Then, 
there exists a natural injective morphism between the deformation functors
\[
\Gamma :\Def _{KS_{\mathscr X}}\to \Def _{\mathscr X}
\]
in the analytic category. 
\end{theorem}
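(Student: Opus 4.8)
The plan is to transplant the construction of \cite{Ia}, Chap.~II to the stacky setting, working on the analytic-\'etale site of $\mathscr X$ and using the identification $\Def_{\mathscr X}\cong\Def_{\mathscr O_{\mathscr X}}$ of Corollary \ref{dmcase}. First I would define $\Gamma$ on Maurer--Cartan solutions. Fix an \'etale atlas $U\to\mathscr X$ and let $x\in MC_{KS_{\mathscr X}}(A)\subset A^{0,1}_U(\Theta_U)\otimes m_A$. Since $x$ lies in $KS^1_{\mathscr X}$ we have $s^*x=t^*x$, so the degree-one operator $\ell_x$ given by contracting the vector-field part into functions and wedging the form part descends to the \'etale site of $\mathscr X$, and the deformed operator $\bar{\rd}+\ell_x$ acts on $\mathscr A^{0,\bullet}_{\mathscr X}\otimes A$. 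The Maurer--Cartan equation $\bar{\rd}x+\frac12[x,x]=0$ is precisely the identity $(\bar{\rd}+\ell_x)^2=0$. I would then set
\[
\mathscr S_x=\Ker\bigl(\bar{\rd}+\ell_x:\mathscr A^{0,0}_{\mathscr X}\otimes A\to\mathscr A^{0,1}_{\mathscr X}\otimes A\bigr),
\]
a sheaf of $A$-algebras on $\mathscr X$. Filtering $A$ by powers of $m_A$, the associated graded complex is the ordinary Dolbeault complex $(\mathscr A^{0,\bullet}_{\mathscr X},\bar{\rd})$, which resolves $\mathscr O_{\mathscr X}$ by the same partition-of-unity argument as in Theorem \ref{dolbeault}; this forces $\mathscr S_x$ to be flat over $A$ with $\mathscr S_x\otimes_A\CC\cong\mathscr O_{\mathscr X}$. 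Via Corollary \ref{dmcase} this yields a well-defined element of $\Def_{\mathscr X}(A)$.

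Next I would check that $\Gamma$ descends to the gauge quotient and is natural in $A$. For $a\in KS^0_{\mathscr X}\otimes m_A=\Gamma(\mathscr X,\mathscr A^{0,0}_{\mathscr X}(\Theta_{\mathscr X}))\otimes m_A$, the exponential $e^a$---a finite sum, as $A$ is artinian---is an automorphism of $\mathscr A^{0,0}_{\mathscr X}\otimes A$ that intertwines $\bar{\rd}+\ell_x$ with $\bar{\rd}+\ell_{e^a*x}$, hence identifies $\mathscr S_x$ with $\mathscr S_{e^a*x}$ as deformations. Therefore $\Gamma$ factors through $\Def_{KS_{\mathscr X}}(A)=MC_{KS_{\mathscr X}}(A)/(\text{gauge equivalence})$. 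Naturality in $A$ is immediate, since every ingredient of the construction commutes with base change along a homomorphism of artinian $\CC$-algebras.

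Finally I would prove injectivity, which is the heart of the matter. Suppose $x_0,x_1\in MC_{KS_{\mathscr X}}(A)$ satisfy $\Gamma([x_0])=\Gamma([x_1])$, i.e.\ there is an isomorphism of deformations $\phi:\mathscr S_{x_0}\overset{\sim}{\to}\mathscr S_{x_1}$ reducing to the identity modulo $m_A$. Following \cite{Ia}, such a $\phi$ is realized by a $C^{\infty}$ $A$-automorphism $g$ of the underlying sheaf of smooth functions with $g\equiv\id\pmod{m_A}$; pulled back to the atlas and written $g=e^a$ with $a\in A^{0,0}_U(\Theta_U)\otimes m_A$ (the logarithm is again a finite sum), a direct computation reproducing the formula of (\ref{dgladef}) shows that $g$ carries $\bar{\rd}+\ell_{x_0}$ to $\bar{\rd}+\ell_{x_1}$ exactly when $x_1=e^a*x_0$. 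The main obstacle is the descent of the gauge generator: I must ensure that $a$ in fact lies in $KS^0_{\mathscr X}\otimes m_A$, i.e.\ that $s^*a=t^*a$. This should follow because $\phi$ is defined on $\mathscr X$ itself and not merely on $U$, so its pullback is automatically compatible with the two projections of the groupoid $R\rightrightarrows U$, and the uniqueness of the logarithm then forces the descent relation. Granting this, $x_0$ and $x_1$ are gauge equivalent in $KS_{\mathscr X}$, so $[x_0]=[x_1]$ and $\Gamma$ is injective. I expect the delicate points to be the realization of $\phi$ by a smooth automorphism $g$ on the \'etale site and the verification that its generator $a$ satisfies $s^*a=t^*a$, since this is exactly where the passage from the atlas to the stack must be controlled.
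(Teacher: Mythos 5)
Your construction of $\Gamma$ follows the paper's route: form $\mathscr S_x=\Ker(\bar{\rd}+\mathbi l_x)$, use Corollary \ref{dmcase}, and check gauge invariance via the conjugation formula $e^a\circ(\bar{\rd}+\mathbi l_x)\circ e^{-a}=\bar{\rd}+\mathbi l_{e^a*x}$. One genuine (and legitimate) variation: you prove flatness by filtering by powers of $m_A$ and comparing with the graded Dolbeault resolution, whereas the paper chooses a \emph{Stein} atlas $U$, uses $H^1(U,\Theta_U)=0$ to find $a$ with $e^a*x=0$, and reads off flatness from the explicit isomorphism $e^a:\mathscr S_x\overset{\sim}{\to}\mathscr O_{\widetilde U}$; defining $\mathbi l_x$ directly on the \'etale site of $\mathscr X$ rather than descending $\mathscr S_x$ from $U$ is a harmless repackaging.

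The injectivity argument, however, has a genuine gap, sitting exactly where you flag delicacy. You assert that an isomorphism $\phi:\mathscr S_{x_0}\to\mathscr S_{x_1}$ of deformations is ``realized by'' an automorphism $g=e^a$ of $\mathscr A^{0,0}_U\otimes A$ with $a\in A^{0,0}_U(\Theta_U)\otimes m_A$, citing \cite{Ia}; but this existence is the content to be proven, and it is here that the paper uses the Stein atlas a second time. It reduces to a small extension $0\to I\to A\to\bar A\to 0$, observes that the two Maurer--Cartan equations force $\bar{\rd}(x_0-x_1)=0$ with $x_0-x_1\in A^{0,1}_U(\Theta_U)\otimes I$, solves $\bar{\rd}u=x_0-x_1$ using $H^1(U,\Theta_U)=0$ so that $e^u*x_0=x_1$, and then identifies the residual automorphism $e^{-u}\circ\phi$ of the trivialized sheaf $\mathscr O_{\widetilde U}$ with $e^q$ for a \emph{holomorphic} $q\in H^0(U,\Theta_U)\otimes I$, giving $\phi=e^{u+q}$. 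None of these steps appears in your outline, and without a cohomologically trivial atlas the generator $a$ exists only locally. Moreover, your mechanism for the descent relation $s^*a=t^*a$ --- ``uniqueness of the logarithm'' --- only shows that $a$ is determined by $g$; what you actually need is that $g$ is determined by $\phi$, i.e.\ that the only exponential automorphism commuting with $\bar{\rd}+\mathbi l_{x_0}$ and restricting to the identity on $\mathscr S_{x_0}$ is the identity. That is provable (the stabilizer reduces to holomorphic vector fields acting trivially on $\mathscr O_{\widetilde U}$), but it is an unstated extra lemma; the paper instead produces $r=u+q$ from ingredients chosen in the $s^*=t^*$-invariant subspaces, so the relation $s^*r=t^*r$ comes with the construction.
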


\begin{proposition}\label{NNmorph}
Let $A$ be a local artinian $\mathbb C$-algebra with the maximal
ideal $m_A$ and $x\in MC_{KS_{\mathscr X}}(A)$ be a Maurer-Cartan solution 
of $KS_{\mathscr X}$. Then, to $x$ we can associate a lifting 
$\widetilde{\mathscr X}\in \Def _{\mathscr X}(A)$.
\end{proposition}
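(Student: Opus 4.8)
The plan is to construct the lifting $\widetilde{\mathscr X}$ from a Maurer-Cartan solution $x \in MC_{KS_{\mathscr X}}(A)$ by working on an étale atlas and then descending. Recall that $KS_{\mathscr X}^1 \subset A^{0,1}_U(\Theta_U)$ consists of those forms $\xi$ with $s^*\xi = t^*\xi$, so a Maurer-Cartan solution $x$ is in particular a Maurer-Cartan solution of the ordinary Kodaira-Spencer algebra $KS_U^\bullet$ of the atlas $U$. By the classical infinitesimal Newlander-Nirenberg theorem (Theorem \ref{classical}), such an $x$ determines a lifting $\widetilde U \in \Def_U(A)$ of the atlas. The whole point of the compatibility condition $s^*x = t^*x$ will be that it forces the groupoid structure $R \rightrightarrows U$ to deform along with $U$, so that we obtain a lifting of the \emph{entire} groupoid space, hence a lifting of the stack.

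First I would make precise the passage from $x$ to a deformed complex structure. Following the proof in \cite{Ia}, Chap.~II, a Maurer-Cartan solution $x \in KS_U^1 \otimes m_A$ determines a new $\bar\partial$-operator $\bar\partial_x = \bar\partial + [x,-]$ on $A$-valued functions on $U$, whose integrability is exactly the Maurer-Cartan equation $\bar\partial x + \frac12[x,x]=0$. The sheaf of $\bar\partial_x$-holomorphic functions is then a flat sheaf of $A$-algebras on $U$ reducing to $\mathscr O_U$ modulo $m_A$; this is the deformed structure sheaf $\mathscr O_{\widetilde U}$. I would then observe that because $s,t: R\to U$ are étale, the pullbacks $s^*x, t^*x$ define deformed $\bar\partial$-operators on $R$, and the étale maps $s,t$ intertwine $\bar\partial_{s^*x}$ (resp.\ $\bar\partial_{t^*x}$) with $\bar\partial_x$. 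The condition $s^*x = t^*x$ means these two deformed structures on $R$ \emph{coincide}, giving a single deformed space $\widetilde R$ together with two flat maps $\tilde s, \tilde t: \widetilde R \to \widetilde U$ lifting $s,t$.

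Next I would promote this to the full groupoid diagram \eqref{gpspdiag}. The remaining structure maps --- the unit $e$, inverse $i$, and multiplication $m$ --- must be lifted to $\widetilde U, \widetilde R$, and this is where the uniform compatibility $s^*x=t^*x$ pays off: each of these maps is built from $s,t$ by fiber products and is étale or built from étale data, so the same deformed $\bar\partial$-operators are compatible across the diagram, and the groupoid axioms lift because they hold modulo $m_A$ and the obstructions to lifting them live in groups that vanish by flatness and the étale hypotheses. The result is a deformed groupoid space $\widetilde R \rightrightarrows \widetilde U$ over $A$. By the analytic analogue of Aoki's theorem (Theorem \ref{aoki}, valid analytically as noted after Proposition \ref{gaga1}), this groupoid space corresponds to a lifting $\widetilde{\mathscr X} \in \Def_{\mathscr X}(A)$, which is the desired element.

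\textbf{The main obstacle} I expect is verifying that the assignment $x \mapsto \widetilde{\mathscr X}$ respects the descent/groupoid data coherently, rather than the construction of the individual deformed spaces. Deforming $U$ alone is routine via Theorem \ref{classical}; the real content is checking that $s^*x = t^*x$ is exactly the condition needed for the two deformed structures on $R$ to agree on the nose (not merely up to isomorphism) and that all the higher groupoid compatibilities (associativity of $m$, the unit and inverse axioms) lift simultaneously. I would handle this by exploiting that $s$ and $t$ are étale, so that $s^*$ and $t^*$ are isomorphisms on the level of $\bar\partial$-operators and commute with the DGLA structure (as already shown in the proof of the Proposition--Definition); this reduces every groupoid identity to an identity that already holds for the undeformed $\mathscr X$ and lifts formally because we work over an artinian base. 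This is precisely the step where the definition of $KS_{\mathscr X}^\bullet$ as the equalizer of $s^*$ and $t^*$ is indispensable, and it is the heart of transplanting Iacono's argument to the stacky setting.
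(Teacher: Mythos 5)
Your construction goes through, but it follows a genuinely different route from the paper's, and it leaves one real gap. The paper does not deform the groupoid space at all in its main proof: it invokes Corollary \ref{dmcase} to reduce everything to producing a sheaf of $A$-algebras on the \'etale site of $\mathscr X$, defines $\mathscr S_x=\Ker(\bar{\rd}+\mathbi l_x)$ on a \emph{Stein} atlas $U$, and descends it using the equality $s^{-1}\mathscr S_x=t^{-1}\mathscr S_x$ of subsheaves of $\mathscr A^{0,0}_R\otimes A$ forced by $s^*x=t^*x$. Your alternative --- deform $U$ and $R$ as ringed spaces, check the structural maps $e,i,m$ remain morphisms, and apply the analytic version of Theorem \ref{aoki} --- is viable, and is in fact essentially what the authors carry out in the Remark following Proposition \ref{modgauge} (there they present the lifted groupoid as the trivial lifting with structural maps twisted by $\eta_a=t^*(e^{-a})\circ s^*(e^a)$). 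What the paper's route buys is that all higher groupoid compatibilities come for free from sheaf descent; what yours buys is independence from Corollary \ref{dmcase}. On your "main obstacle": the clean argument is not that obstruction groups vanish, but that the underlying analytic spaces and maps of the groupoid are unchanged and only the structure sheaves are modified, so each axiom reduces to a pullback identity such as $e^*s^*x=x$ and $m^*s^*x=p_1^*s^*x$, all consequences of $s^*x=t^*x$.

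The gap is flatness. You assert that the sheaf of $\bar{\rd}_x$-holomorphic functions is flat over $A$ and reduces to $\mathscr O_U$ mod $m_A$, but this is exactly the point that requires proof, and Theorem \ref{classical} cannot simply be cited for the atlas: the atlas $U$ is not compact, and in any case one needs flatness of the specific sheaf $\Ker(\bar{\rd}+\mathbi l_x)$. The paper's mechanism is to choose $U$ smooth and Stein, so that $H^1(U,\Theta_U)=0$, hence $\Def_{KS_U}$ is trivial and there exists $a\in A^{0,0}_U(\Theta_U)\otimes m_A$ with $e^a*x=0$; the gauge computation $e^a\circ(\bar{\rd}+\mathbi l_x)\circ e^{-a}=\bar{\rd}$ then identifies $\mathscr S_x$ with $\mathscr O_{\widetilde U}$, which is flat. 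This choice of a Stein atlas is not a convenience but a load-bearing step (it is reused in the injectivity argument of Proposition \ref{modgauge}), and your write-up should make it explicit.
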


\begin{proof}
 Corollary \ref{dmcase} implies that it is enough to construct a sheaf of
 algebras $\mathscr S_x$ on $\mathscr X$ flat over $A$ such that $\mathscr S_x\otimes
 _A \mathbb C\cong \mathscr O_{\mathscr X}$.  
 Take a smooth Stein space $U$ as an atlas $U\to \mathscr X$. 	
 The tangent space to the deformation functor $\Def _{KS_U}$ associated 
 to the Kodaira-Spencer algebra $KS_U^\bullet$ of $U$ is isomorphic to 
 $H^1(U, \Theta _U)$, which is in fact trivial, for $U$ is smooth and Stein. 
 This means that $\Def _{KS_U}$ is trivial (\cite{Ia} Lemma II.7.1). 
 In other words, for any Mauer-Cartan solution $x\in MC_{KS_{\mathscr X}}(A)
 \subset MC_{KS_U}(A)$, there is a 
 $C^{\infty}$-vector field $a\in A^{0,0}_U(\Theta _U)\otimes m_A$ such that $e^a * x=0$. 
 We define the operator $\mathbi l_x : \mathscr A^{0,0}_U\otimes A
 \to \mathscr A^{0,1}_U \otimes A$ 
 for $x=\sum _{i,j} x_{ij} d\bar{z}_i\frac{\rd}{\rd z_j}$ by the contraction
 \[
 \mathbi l_x(f) 
 = - \sum _{i,j} x_{ij} \frac{\rd f}{\rd z_j}d\bar{z}_i, 
 \]
 where $z_i$ are local holomorphic coordinates on $U$. 
 By an explicit calculation of the gauge action (\cite{Ia}, Lemma II.5.5), we have 
 \[
 e^a \circ (\bar{\rd} + \mathbi l_x ) \circ e^{-a} 
 = \bar{\rd} + e^a * \mathbi l_x =\bar{\rd},
 \]
 where the last equality follows from $e^a * x=0$. This means that 
 the diagram 
 \[
 \xymatrix{
 \mathscr S_x=\Ker (\bar{\rd} + \mathbi l_x) \ar[r] \ar[d]_{e^a} &
 \mathscr A^{0,0}_U\otimes A \ar[r]^{\bar{\rd}+\mathbi l_x} \ar[d]_{e^a} &
 \mathscr A^{0,1}_U\otimes A  \ar[d]_{e^a} \\
 \mathscr O_{\widetilde U}\ar[r] & 
 \mathscr A^{0,0}_U\otimes A \ar[r]^{\bar{\rd}} &
 \mathscr A^{0,1}_U\otimes A
 }\]
 is commutative, where $\widetilde U=U\times \Spec A$. 
 In particular, $e^a : \mathscr S_x \overset{\sim}{\to}
 \mathscr O_{\tilde U}$ is an isomorphism of sheaf of algebras over $A$. 
 Since $\mathscr O_{\widetilde U}$ is flat over $A$, $\mathscr S_x$ is 
 also flat over $A$. 

 Let $y=s^*x=t^*x\in A^{0,1}_R(\Theta)\otimes m_A$. We have an analogous diagram on 
 $R$:
 \[
 \xymatrix{
 s^{-1}\mathscr S_x=\Ker (\bar{\rd} + \mathbi l_{y}) \ar[r] \ar[d]_{e^{s^*a}} &
 \mathscr A^{0,0}_R\otimes A \ar[r]^{\bar{\rd}+\mathbi l_y} \ar[d]_{e^{s^*a}} &
 \mathscr A^{0,1}_R\otimes A  \ar[d]_{e^{s^*a}} \\
 s^{-1}\mathscr O_{\widetilde U}\cong \mathscr O_{\widetilde R}\ar[r] & 
 \mathscr A^{0,0}_R\otimes A \ar[r]^{\bar{\rd}} &
 \mathscr A^{0,1}_R\otimes A,  
 }\]
 where $\widetilde R=R\times \Spec A$, 
 and the same diagram also for $t^{-1} \mathscr S_x$. 
 Hence we have $s^{-1}\mathscr S_x=t^{-1}\mathscr S_x$ as sub-sheaves of 
 $\mathscr A^{0,0}_R\otimes A$. This means that the descent data for
 $\mathscr A^{0,0}\otimes A$ induces a descent data for $\mathscr S_x$. 
 Therefore, $\mathscr S_x$ descends to a sheaf of algebras on $\mathscr X$ flat over $A$. 
 $\mathscr S_x\otimes _A\mathbb C\cong \mathscr O_{\mathscr X}$ is obvious.  
\end{proof}

The proposition says that we have a morphism 
$\hat\Gamma :MC_{KS_{\mathscr X}}\to \Def _{\mathscr X}$. 
The following proposition concludes the proof of Theorem \ref{NNmorph-p}.

\begin{proposition} \label{modgauge}
$\hat\Gamma$ descends to an injective morphism 
\[
\Gamma :\Def _{KS_{\mathscr X}}\to \Def _{\mathscr X}.
\]
\end{proposition}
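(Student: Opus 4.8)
The plan is to establish the sharper equivalence that two Maurer--Cartan solutions $x,x'\in MC_{KS_{\mathscr X}}(A)$ are gauge equivalent in $KS_{\mathscr X}^{\bullet}$ \emph{if and only if} the liftings $\mathscr S_x$ and $\mathscr S_{x'}$ produced in Proposition \ref{NNmorph} define the same class in $\Def_{\mathscr X}(A)$. The forward implication says that $\hat\Gamma$ is constant on gauge classes, hence descends to a morphism $\Gamma\colon\Def_{KS_{\mathscr X}}\to\Def_{\mathscr X}$; the reverse implication is exactly the injectivity of $\Gamma$.

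For the forward implication, suppose $x'=e^{b}*x$ with $b\in KS^{0}_{\mathscr X}\otimes m_A\subset A^{0,0}_U(\Theta_U)\otimes m_A$. The conjugation identity for the gauge action (\cite{Ia}, Lemma II.5.5) already used in Proposition \ref{NNmorph} gives $e^{b}\circ(\bar{\rd}+\mathbi l_x)\circ e^{-b}=\bar{\rd}+\mathbi l_{e^{b}*x}=\bar{\rd}+\mathbi l_{x'}$ on the atlas $U$, so $e^{b}$ restricts to an isomorphism of sheaves of $A$-algebras $\mathscr S_x=\Ker(\bar{\rd}+\mathbi l_x)\overset{\sim}{\to}\mathscr S_{x'}=\Ker(\bar{\rd}+\mathbi l_{x'})$. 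Because $b\in KS^{0}_{\mathscr X}$ means exactly $s^{*}b=t^{*}b$, the two pullbacks $e^{s^{*}b}$ and $e^{t^{*}b}$ coincide on $R$, so $e^{b}$ is compatible with the descent data $s^{-1}\mathscr S_x=t^{-1}\mathscr S_x$ and $s^{-1}\mathscr S_{x'}=t^{-1}\mathscr S_{x'}$ exhibited in Proposition \ref{NNmorph}. Hence $e^{b}$ descends to an isomorphism $\mathscr S_x\cong\mathscr S_{x'}$ on $\mathscr X$, and $\hat\Gamma$ factors through $\Def_{KS_{\mathscr X}}$.

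For injectivity, let $\phi\colon\mathscr S_x\overset{\sim}{\to}\mathscr S_{x'}$ be an isomorphism of liftings over $\mathscr X$ reducing to the identity modulo $m_A$. Pulling back to the smooth Stein atlas $U$ gives an $A$-algebra isomorphism $\phi_U\colon\mathscr S_x\to\mathscr S_{x'}$ that is compatible with the groupoid, i.e. $s^{*}\phi_U=t^{*}\phi_U$ on $R$. Since $U$ is smooth and Stein we have $H^{1}(U,\Theta_U)=0$, so $\Def_{KS_U}$ is trivial (\cite{Ia}, Lemma II.7.1); reading the conjugation identity of the previous paragraph backwards, every $A$-algebra isomorphism $\mathscr S_x\to\mathscr S_{x'}$ lifting the identity is induced by the gauge action of some $c\in A^{0,0}_U(\Theta_U)\otimes m_A$ with $e^{c}*x=x'$, and such $c$ is unique up to the stabilizer of $x$, which after trivialization is $\exp\bigl(H^{0}(U,\Theta_U)\otimes m_A\bigr)$, the exponentials of holomorphic vector fields.

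The main obstacle is then to correct $c$ so that it becomes invariant, $s^{*}c=t^{*}c$, i.e. $c\in KS^{0}_{\mathscr X}\otimes m_A$. The relation $s^{*}\phi_U=t^{*}\phi_U$ forces $e^{s^{*}c}$ and $e^{t^{*}c}$ to agree on $s^{-1}\mathscr S_x=t^{-1}\mathscr S_x$, hence to differ (up to the group law on the gauge group) by a holomorphic vector field $\delta\in H^{0}(R,\Theta_R)\otimes m_A$; modifying $c$ by $c_0\in H^{0}(U,\Theta_U)\otimes m_A$ changes $\delta$ by $(t^{*}-s^{*})c_0$, so the obstruction to invariance is the class of $\delta$ in $H^{1}(\mathscr X,\Theta_{\mathscr X})$, read off the leftmost ($\Theta$-valued) column of the \v{C}ech--Dolbeault double complex \eqref{CechDolbeault}. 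To see this class vanishes I would induct on $\dim_{\mathbb C}A$ along a small extension $0\to I\to A\to\bar A\to 0$: over $\bar A$ the inductive hypothesis makes $\bar x,\bar x'$ gauge equivalent, so after applying an invariant gauge transformation (legitimate by the forward implication) I may assume $\bar x=\bar x'$, whence $x-x'\in KS^{1}_{\mathscr X}\otimes I$ is $\bar{\rd}$-closed because the bracket corrections lie in $m_A\cdot I=0$. Through the identification of Theorem \ref{dolbeault} and the torsor description of liftings in Corollary \ref{dmcase}, the class of $x-x'$ in $H^{1}(\mathscr X,\Theta_{\mathscr X})\otimes I$ is precisely the difference class of the two liftings $\mathscr S_x,\mathscr S_{x'}$, which vanishes since they are isomorphic; thus $x-x'=\bar{\rd}\xi$ for some $\xi\in KS^{0}_{\mathscr X}\otimes I$ and $x'=e^{\xi}*x$. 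Combined with the reduction step this exhibits $x$ and $x'$ as gauge equivalent in $KS_{\mathscr X}^{\bullet}$, proving $\Gamma$ injective. The crux throughout is this compatibility, furnished by the double complex of Theorem \ref{dolbeault}, between the Maurer--Cartan cohomology class and the Ext-theoretic class governing liftings in Corollary \ref{dmcase}.
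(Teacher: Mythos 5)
Your first half (the descent of $\hat\Gamma$) is correct and is essentially the paper's own argument: conjugation by $e^{b}$ via Iacono's Lemma II.5.5 identifies $\mathscr S_x$ with $\mathscr S_{x'}$ on the atlas, and $s^*b=t^*b$ makes this compatible with the descent data, so the isomorphism lives on $\mathscr X$.

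The injectivity half has a genuine gap at the step ``the class of $x-x'$ in $H^1(\mathscr X,\Theta_{\mathscr X})\otimes I$ is precisely the difference class of the two liftings, which vanishes since they are isomorphic.'' The torsor of Corollary \ref{dmcase} lives on isomorphism classes of liftings of the \emph{fixed} reduction $\mathscr S_{\bar x}=\mathscr S_{\bar x'}$, so the difference class vanishes if and only if there is an isomorphism $\mathscr S_x\to\mathscr S_{x'}$ restricting to the \emph{identity} of $\mathscr S_{\bar x}$. What you have is only an abstract isomorphism $\phi$ of liftings of $\mathscr O_{\mathscr X}$, whose reduction $\bar\phi$ is an a priori arbitrary automorphism of $\mathscr S_{\bar x}$; two liftings with nonzero difference class can perfectly well be abstractly isomorphic, the class being well defined only modulo the action of $\Aut(\mathscr S_{\bar x})$. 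Your weak inductive hypothesis (``gauge equivalent iff equal in $\Def_{\mathscr X}(A)$'') gives no control over $\bar\phi$, so the induction does not close. This is exactly why the paper runs the induction on the stronger statement that every descent-compatible isomorphism $\psi:\mathscr S_x\to\mathscr S_y$ is itself of the form $e^{r}$ with $r\in KS^0_{\mathscr X}\otimes m_A$: that hypothesis lets one normalize $\psi$ to reduce to the identity modulo $I$, write $x-y=\bar\rd u$, and identify the residual automorphism of $\mathscr O_{\widetilde U}$ with $e^{q}$ for a holomorphic $q\in H^0(U,\Theta_U)\otimes I$ — all explicit, with no appeal to the torsor/Dolbeault comparison. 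Two secondary problems: your third paragraph introduces an obstruction $\delta\in H^0(R,\Theta_R)\otimes m_A$ to making $c$ invariant, but the fourth paragraph never returns to $\delta$ — it kills a different class, and you neither check that $\delta$ is a \v{C}ech cocycle nor identify $[\delta]$ with $[x-x']$ through the double complex \eqref{CechDolbeault} — and the compatibility you invoke between the Dolbeault class and the Ext-theoretic class of Corollary \ref{dmcase} is itself only established later, in the proof of Theorem \ref{main}, by a diagram chase you omit.
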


\begin{proof}
 Assume $x,y\in MC_{KS _\mathscr X}(A)$ are gauge equivalent, i.e., there
 exists $r\in KS^0_{\mathscr X}\otimes m_A$ such that $e^r*x=y$. By \cite{Ia},
 Lemma II.5.5, we have a commutative diagram
 \[
 \xymatrix{
 \mathscr S_x=\Ker (\bar{\rd} + \mathbi l_x) \ar[r] \ar[d]_{e^r} &
 \mathscr A^{0,0}_U\otimes A \ar[r]^{\bar{\rd}+\mathbi l_x} \ar[d]_{e^r} &
 \mathscr A^{0,1}_U\otimes A  \ar[d]_{e^r} \\
 \mathscr S_y=\Ker (\bar{\rd} + \mathbi l_y) \ar[r] & 
 \mathscr A^{0,0}_U\otimes A \ar[r]^{\bar{\rd} + \mathbi l_y} &
 \mathscr A^{0,1}_U\otimes A
 }\]
 whose columns are isomorphisms. 
 $r\in KS_{\mathscr X}^0\otimes m_A$ implies $e^{s^*r}=e^{t^*r}$ so that $e^r$ 
 descends to an isomorphism between $\mathscr S_x$ and $\mathscr S_y$ 
 on $\mathscr X$.  This means that $\hat\Gamma$ factors
 through $\Gamma$. 

 To prove the injectivity of $\Gamma$, we show that
 given an isomorphism $\psi :\mathscr S_x\to \mathscr S_y$ satisfying 
 $s^*\psi = t^*\psi$, 
 there exists $r\in KS_{\mathscr X}^0\otimes m_A$ such that $\psi = e^r$. 
 It is enough to show this for small extensions by inductive
 argument. In other words, we can assume that there is an extension
 \[
  0\lto I\lto A \lto \bar A\lto 0
 \]
 with $I\cdot m_A=0$ and $p=x-y\in A^{0,1}_U(\Theta _U)\otimes I$. 
 Under this assumption, we have 
 \[
 0= \bar\rd (y+p)+\frac 12[y+p, y+p]
 = \bar\rd y + \bar\rd p + \frac 12[y,y]=\bar \rd p. 
 \]
 Since we assumed that $U$ is Stein, $H^1(U,\Theta _U)$ vanishes. 
 By the (usual) Dolbeault theorem, this means that the Dolbeault complex
 $(A^{0,\bullet}_U(\Theta _U), \bar\rd)$ is exact at the degree 1 place. 
 Therefore, we have $u\in KS^0_{\mathscr X}\otimes I$ such that $\bar\rd u=p$. 
 Then, we have
 \[
 e^u * x = x + \sum _{n=0}^{\infty} \frac{[u,-]^n}{(n+1)!}([u,x]-\bar\rd u)
 = x - \bar\rd u = x - p = y.
 \]
 Take $a\in A^{0,0}_U(\Theta _U)\otimes m_A$ which induces an isomorphism
 $e^a:\mathscr S_x\to \mathscr O_{\widetilde U}$ and $\varphi $ be an
 automorphism of $\mathscr O_{\widetilde U}$ making the diagram 
 \[
 \xymatrix{
 \mathscr S_x \ar[r]^{\psi} \ar[d]^{e ^a}& \mathscr S_y \ar[r]^{e^{-u}}
 &\mathscr S_x\ar[d]_{e^a}\\
 \mathscr O_{\widetilde U} \ar[rr]^{\varphi}
 & & \mathscr O_{\widetilde U} 
 }
 \]
 commutative. Because $e^{-u}\circ \psi =\id \mbox{ mod } I$, 
 $\varphi =\id \mbox{ mod } I$. Therefore, there exists $q\in H^0(U,\Theta
 _U)\otimes I$ such that $\varphi = e^q$. Note that $e^q$ commutes with
 $e^a$ since the coefficients of $q$ are in $I$. This implies
 $\psi = e^u\circ e^{-a}\circ e^q\circ e^a= e^{u+q}$. 
 In other words, we have $\psi =e^r$ for $r=u+q\in KS_{\mathscr
 X}^0\otimes m_A$.
\end{proof}

\begin{remark}
 In the argument above, we can construct the lifting of the groupoid
 representation associated to $U\to \mathscr X$ 
 without appealing to Corollary \ref{dmcase}. Let $\widetilde
 R\rightrightarrows \widetilde U$ be the trivial
 lifting to $A$ of the groupoid $R\rightrightarrows U$ representing 
 $\mathscr X$ and $q_0=(s_0, t_0, e_0, m_0, i_0)$ the structural
 morphisms of $\widetilde R\rightrightarrows \widetilde U$. As we
 assumed $U$ Stein, every lifting of $R\rightrightarrows U$ to $A$ is
 given only by twisting $q_0$. The isomorphism $e^a :\mathscr
 S_x\overset{\sim}{\to}\mathscr O_{\widetilde U}$ appeared in the proof
 of Proposition \ref{NNmorph} induces an automorphism 
 \[
 \eta _a: 
 (\widetilde R, \mathscr O_{\widetilde R})
 \overset{s^*(e^a)}{\lto}
 (\widetilde R, s^{-1}\mathscr S_x) 
 = (\widetilde R, t^{-1}\mathscr S_x)
 \overset{t^*(e^{-a})}{\lto}
 (\widetilde R, \mathscr O_{\widetilde R}). 
 \]
 If we define $q_x=(s_x, t_x, e_x, m_x, i_x)$ by 
 \[
  s_x=s_0,\quad t_x=t_0\circ \eta _a,\quad
  e_x=e_0,\quad m_x=m_0\circ (p_2^*\eta _a^{-1}),\quad 
  i_x=i_0\circ \eta _a, 
 \]
 where $p_2$ is the projection 
 $\widetilde R\times _{\widetilde U} \widetilde R\to \widetilde R$ 
 to the second factor, 
 it is straightforward to check that $q_x$ satisfies 
 the axioms of a groupoid space and the lifting $q_x$ of the
 groupoid space actually corresponds to $\mathscr S_x\in \Def _{\mathscr
 O_{\mathscr X}}(A)$. In this way, we can prove Corollary 2.4 by hand for 
 a smooth separated DM stack $\mathscr X$. 
\end{remark}

\begin{theorem}\label{main}
 Let $\mathscr X$ be a smooth separated analytic Deligne-Mumford stack 
 and $KS_{\mathscr X}$ 
 be the associated Kodaira-Spencer algebra. Then, the morphism of functors 
 $\Gamma :\Def_{KS_{\mathscr X}} \to \Def _{\mathscr X}$
 in Theorem \ref{NNmorph-p} is actually an isomorphism in the analytic
 category.  
\end{theorem}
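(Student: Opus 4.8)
The plan is to deduce the theorem from the criterion of Proposition \ref{criteria}, exploiting that $\Gamma$ has already been shown to be injective in Theorem \ref{NNmorph-p}. Recall that a smooth morphism of deformation functors is automatically surjective on each set of $A$-points: applying the smoothness condition to the surjection $A\to \CC$ gives surjectivity of $\Def_{KS_{\mathscr X}}(A)\to \Def_{\mathscr X}(A)$. Hence it suffices to prove that $\Gamma$ is smooth; combined with the injectivity from Theorem \ref{NNmorph-p}, this yields a functorial bijection $\Def_{KS_{\mathscr X}}(A)\overset{\sim}{\to}\Def_{\mathscr X}(A)$ for every $A$, which is exactly the asserted isomorphism.

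First I would set up the two obstruction theories. The source $\Def_{KS_{\mathscr X}}$ carries the natural complete obstruction theory of a DGLA, with tangent space $H^1(KS^{\bullet}_{\mathscr X},\bar{\rd})$ and obstruction space $H^2(KS^{\bullet}_{\mathscr X},\bar{\rd})$ (see \ref{dgladef}). The target $\Def_{\mathscr X}$ carries the complete obstruction theory constructed via Corollary \ref{dmcase}, with tangent space $H^1(\mathscr X,\Theta_{\mathscr X})$ and obstruction space $H^2(\mathscr X,\Theta_{\mathscr X})$. The Dolbeault type isomorphism of Theorem \ref{dolbeault} supplies canonical isomorphisms $H^p(KS^{\bullet}_{\mathscr X},\bar{\rd})\overset{\sim}{\to}H^p(\mathscr X,\Theta_{\mathscr X})$ for $p=1,2$, which will serve as the maps required by Proposition \ref{criteria}.

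Next I would verify the three hypotheses of Proposition \ref{criteria} for $\varphi=\Gamma$. For (i): the tangent map sends the class of a Maurer-Cartan solution $x\in KS^1_{\mathscr X}\otimes(\varepsilon)$, so $\bar{\rd}x=0$, to the first-order deformation $\mathscr S_x=\Ker(\bar{\rd}+\mathbi l_x)$ produced in Proposition \ref{NNmorph}; unwinding that construction shows that the Kodaira-Spencer class of $\mathscr S_x$ in $H^1(\mathscr X,\Theta_{\mathscr X})$ is precisely the image of $[x]$ under the Dolbeault isomorphism of Theorem \ref{dolbeault}, so the tangent map is that isomorphism and is in particular bijective (already giving the \'etale conclusion). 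For (ii): I take $\psi$ to be the Dolbeault isomorphism $H^2(KS^{\bullet}_{\mathscr X},\bar{\rd})\overset{\sim}{\to}H^2(\mathscr X,\Theta_{\mathscr X})$, which is injective, and the compatibility $\ob_{\Def_{\mathscr X}}\circ\Gamma=\psi\circ\ob_{\Def_{KS_{\mathscr X}}}$ is obtained by tracing the sheaf $\mathscr S_x$ through the two definitions of the obstruction class and matching them by \v{C}ech--Dolbeault representatives as in the proof of Theorem \ref{dolbeault}. Hypothesis (iii), completeness of the DGLA obstruction theory, is exactly (\ref{dgladef})(iii).

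The main obstacle is precisely the compatibility of obstruction classes in step (ii). One must check that the intrinsic DGLA obstruction to lifting a Maurer-Cartan solution across a small extension $0\to I\to A\to \bar A\to 0$ (the class of $\tfrac 12[x,x]$ in $H^2(KS^{\bullet}_{\mathscr X},\bar{\rd})\otimes I$) is carried by the \v{C}ech--Dolbeault quasi-isomorphism of diagram (\ref{CechDolbeault}) onto the deformation-theoretic obstruction in $\Ext^2(L_{\mathscr X/S},f^*I)\cong H^2(\mathscr X,\Theta_{\mathscr X})\otimes I$ governing $\mathscr S_x$ through Corollary \ref{dmcase}. This is the point where the transplantation of Iacono's argument (\cite{Ia}, Chap.~II) has to be carried out in earnest: the matching holds because $\mathscr S_x$ is built directly from $x$ as $\Ker(\bar{\rd}+\mathbi l_x)$, so the gauge-theoretic obstruction to extending $x$ coincides with the obstruction to descending the local liftings, but it is the one step demanding genuine verification rather than formal bookkeeping. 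Granting it, Proposition \ref{criteria} shows $\Gamma$ is smooth (in fact \'etale), hence surjective on every $\Def(A)$; together with the injectivity of Theorem \ref{NNmorph-p} this proves that $\Gamma$ is an isomorphism of deformation functors in the analytic category.
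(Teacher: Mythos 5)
Your proposal follows essentially the same route as the paper: the authors also prove the theorem by applying Proposition \ref{criteria} to $\Gamma$ with the DGLA obstruction theory on the source, the obstruction theory of Corollary \ref{dmcase} on the target, and the Dolbeault isomorphisms of Theorem \ref{dolbeault} as the comparison maps on $H^1$ and $H^2$, then combine \'etaleness with the injectivity from Theorem \ref{NNmorph-p}. The one step you defer --- matching the DGLA obstruction $[\bar{\rd}\tilde x+\tfrac 12[\tilde x,\tilde x]]$ with the descent obstruction for $\mathscr S_x$ --- is carried out in the paper exactly as you indicate, by an explicit diagram chase in \eqref{CechDolbeault} producing \v{C}ech--Dolbeault representatives $\tau$, $\rho$, $\omega$.
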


\begin{proof}
 To prove this theorem, we show that $\Gamma $ is \'etale. 
 According to Proposition \ref{criteria}, 
 this is equivalent to show that
 \begin{enumerate}[(i)]
  \item The `Dolbeault isomorphism' in Theorem \ref{dolbeault} 
  	on $H^1$ is actually the map induced
	by the morphism of functors $\Gamma :\Def _{KS_{\mathscr X}}\to 
	\Def _{\mathscr X}$. 
  \item The `Dolbeault isomorphism' in Theorem \ref{dolbeault} 
  	on $H^2$ satisfies the condition
	(ii) of Proposition \ref{criteria}.  
 \end{enumerate}
 For (i), the diagram chasing in \eqref{CechDolbeault} shows that 
 $x \in Z^1 (KS ^{\bullet}_{\mathscr X},\bar{\rd})$ corresponds to an
 element $g\in \Ker (\Gamma (U_1, \Theta)\overset{d}{\lto} 
 \Gamma (U_2, \Theta))$. As we consider over the ring of dual numbers
 $A=\mathbb C[\varepsilon]$, the condition $g\in \Ker d$ is equivalent to
 say that $(\id +g) :s^*\mathscr O_U\otimes A\to t^*\mathscr O_U\otimes A$
 satisfies the cocycle condition of descent and the sheaf on $\mathscr
 X$ given by descent with this twist is isomorphic to $\mathscr S_x$
 appeared in the proof of Theorem \ref{NNmorph-p}. 

 Now we prove (ii). 
 Let $0\to I\to A\to \bar A\to 0$ be a small extension. Let $x\in MC_{KS
 _{\mathscr X}(\bar A)}$ be a Maurer-Cartan solution on $\bar A$ and
 $\tilde x\in KS ^1_{\mathscr X}\otimes m_A$ an arbitrary lifting to $A$. 
 The obstruction to the existence of a lifting of $x$ in 
 $MC _{KS_{\mathscr X}}(A)$ is 
 \[
  h=\bar{\rd}\tilde x + \frac 12 [\tilde x,\tilde x]
  \in \Ker(KS^2_{\mathscr X}\otimes I \overset{\bar\rd}{\to}
  KS^3_{\mathscr X}\otimes I), 
 \]
 which does not depend on the choice of a lifting
 $\tilde x$. By a diagram chasing in \eqref{CechDolbeault} gives
 $\tau \in \Gamma (U_0,\mathscr A^{0,1}(\Theta))\otimes I$, 
 $\rho \in \Gamma (U_1,\mathscr A^{0,0}(\Theta))\otimes I$, and 
 $\omega \in \Ker (\Gamma (U_2,\Theta)\otimes I\to \Gamma (U_3,\Theta)\otimes I)$
 satisfying 
 \[
 \bar{\rd}\tau = h,\quad \bar{\rd}\rho = t^*\tau -s^*\tau,\quad
 \omega = p_2^*\rho - m^*\rho + p_1^*\rho,  
 \]
 where $p_1$, $m$, and $p_2$ are the projections $pr_{12}$, $pr_{13}$ 
 and $pr_{23}$ on 
 $U_2=U\times _{\mathscr X} U \times _{\mathscr X} U$, respectively. 
 If we put $\hat x=\tilde x-\tau$, we can check $\hat x\in
 MC_{KS_U}(A)$ (using the extension $A\to \bar A$ is small). 
 This means that there is a sheaf $\mathscr S_{\hat x}$ on $U$, 
 which is isomorphic to $\mathscr O_{\widetilde U}$. $\bar{\rd}\rho =
 t^*\tau -s^*\tau$ implies that $e^{\rho}:s^*\mathscr S_{\hat x}\to
 t^*\mathscr S_{\hat x}$ is an isomorphism (again using the smallness of
 the extension). The cocycle condition for descent is equivalent to the
 vanishing of the class $[\omega ] = [p_2^*\rho - m^*\rho + p_1^*\rho]$ 
 in the cohomology group $H^2(\mathscr X, \Theta _{\mathscr X})$. 
 This implies that $\omega$ gives the obstruction to lifting  
 $\mathscr S_{x}$ to $A$, thus we checked the condition (ii) of
 Proposition \ref{criteria}. 
\end{proof}


\end{document}